\crefname{equation}{}{}
\newtheorem{thm}{Theorem}[section]
\newtheorem{lem}[thm]{Lemma}
\newtheorem{cor}[thm]{Corollary}
\newtheorem{rmk}{Remark}
\newtheorem{assum}{Assumption}
\numberwithin{equation}{section}
\newcommand{\bel}{\begin{equation} \label}
\newcommand{\ee}{\end{equation}}
\def\beq{\begin{equation}}
\def\eeq{\end{equation}}
\newcommand{\jump}[1]{\llbracket#1\rrbracket}
\newcommand{\bea}{\begin{eqnarray}}
\newcommand{\eea}{\end{eqnarray}}
\newcommand{\beas}{\begin{eqnarray*}}
\newcommand{\eeas}{\end{eqnarray*}}
\newcommand{\cL}{\mathcal{L}}
\renewcommand{\div}{\mathrm{div}\,}  
\def\epsilon{\varepsilon}
\DeclareMathOperator{\dis}{dist}
\newcommand{\wop}{\square_c}
\newcommand{\tnorm}[1]{\vert\hspace{-0.3mm}\Vert#1\Vert\hspace{-0.3mm}\vert}
\providecommand{\norm}[1]{\left\lVert#1\right\rVert}
\renewcommand{\leq}{\leqslant}
\providecommand{\norm}[1]{\left\lVert#1\right\rVert}
\newcommand{\dT}{\mathrm{d}t}
\newcommand{\dX}{\mathrm{d}x}
\newcommand{\dS}{\mathrm{d}S}
\newcommand{\FullyDiscrSpace}[2]{ W^{ {#1},{#2}}_{h, \Delta t  } }
\newcommand{\ProdFullyDiscrSpace}[2]{ \mathcal{W}^{ {#1},{#2}}_{h, \Delta t  } }
\DeclarePairedDelimiterX{\inp}[2]{(}{)}{#1, #2}
\newcommand{\tangular}[1]{ \llbracket\kern-0.5ex|#1|\kern-0.5ex\rrbracket} 
\newcommand{\putextra}[1]{\ifthenelse{\boolean{includeextras}}{#1}{}}
\newcommand{\Uh}{\underline{\mathbf{U}}_h}
\newcommand{\Vh}{\underline{\mathbf{V}}_h}
\newcommand{\Yh}{\underline{\mathbf{Y}}_h}
\newcommand{\Zh}{\underline{\mathbf{Z}}_h}
\newcommand{\Wh}{\underline{\mathbf{W}}_h}
\newcommand{\ul}{\underline{u}}
\newcommand{\yl}{\underline{y}}
\newcommand{\zl}{\underline{z}}
\newcommand{\wl}{\underline{w}}
\newcommand{\Sud}{S^{\uparrow \downarrow}_{\Delta t}}
\newcommand{\dt}{\partial_t}
\newcommand{\changed}[1]{{\color{blue!70!black} #1}}
\newcommand{\jp}[1]{{\color{blue!70!black} #1}}
\renewcommand{\changed}[1]{{\color{black} #1}}
\renewcommand{\jp}[1]{{\color{black} #1}}
\pgfplotsset{compat=1.18}
\pgfplotsset{
cycle list/Set1-5,
cycle multiindex* list={
mark list*\nextlist
Set1-5\nextlist
},
}
\pgfplotsset{
    discard if not/.style 2 args={
        x filter/.append code={
            \edef\tempa{\thisrow{#1}}
            \edef\tempb{#2}
            \ifx\tempa\tempb
            \else
                
            \fi
        }
    }
}
\providecommand{\jp}[1]{\newline {\color{cyan} \hspace*{-0.5cm} $\rightsquigarrow$ \underline{\textbf{JP:}} \textit{#1} \newline}}
\date{\today}
\begin{document}

\title{Variational data assimilation for the wave equation in heterogeneous media}
\subtitle{Numerical investigation of stability}

\author[1]{\fnm{Erik} \sur{Burman}}\email{e.burman@ucl.ac.uk}
\author[2]{\fnm{Janosch} \sur{Preuss}}\email{janosch.preuss@inria.fr}
\author[3]{\fnm{Tim} \sur{van Beeck}}\email{t.beeck@math.uni-goettingen.de}

\affil[1]{\orgdiv{Department of Mathematics}, \orgname{University College London}, \orgaddress{\street{Gower Street}, \city{London}, \postcode{WC1E 6BT}, \country{United Kingdom}}}
 
 \affil[2]{\orgdiv{Inria Project-Team Makutu}, \orgname{Université de Pau et des Pays de l’Adour}, \orgaddress{\street{Avenue de l'Université - BP 1155}, \city{Pau}, \postcode{64013}, \country{France}}}

\affil[3]{\orgdiv{Institute for Numerical and Applied Mathematics}, \orgname{University of G\"{o}ttingen}, \orgaddress{\street{Lotzestr. 16-18}, \city{G\"{o}ttingen}, \postcode{37083}, \country{Germany}}}

\abstract{In recent years, several numerical methods for solving the unique continuation problem for the wave equation in a homogeneous medium with given data on the lateral boundary of the space-time cylinder have been proposed. This problem enjoys Lipschitz stability if the geometric control condition is fulfilled, which allows devising optimally convergent numerical methods. In this article, we investigate whether these results carry over to the case in which the medium exhibits a jump discontinuity. Our numerical experiments suggest a positive answer. However, we also observe that the presence of discontinuities in the medium renders the computations far more demanding than in the homogeneous case.
}

\keywords{Unique continuation, data assimilation, wave equation, discontinuous coefficients, geometric control condition}

\maketitle

\section{Introduction}

We consider the discretization of a data assimilation problem for the wave equation in heterogeneous media. In the following, let $\Omega \subset \mathbb{R}^d$, $d \in \{1,2,3\}$, be a bounded domain with smooth boundary $\partial \Omega$. We assume that $\Omega$ is split by a smooth $(d-1)$-dimensional interface $\Gamma$ into two disjoint components such that $\Omega \setminus \Gamma = \Omega_1 \cup 
\Omega_2$ and $\Omega_1 \cap \Omega_2 = \emptyset$.
 
We consider a scalar wave speed
\[
 c(x) := \begin{cases}
       c_1(x),  & x \in \Omega_1, \\
        c_2(x), & x \in \Omega_2,
    \end{cases}
    \quad 
    \text{ with } c_i \in C^\infty(\Omega_i), \; i = 1,2,
\]
which may be discontinuous across the interface $\Gamma$, but is constant in time. For a final time $T > 0$, we consider the space-time domain $Q = (0,T) \times \Omega$ with lateral boundary $\Sigma \coloneqq (0,T) \times \partial \Omega$, and for any nonempty subset $\omega \subset \Omega$, we set $\omega_T \coloneqq (0,T) \times \omega$. 

Let $\wop$ be the wave operator with discontinuous wave-speed defined as 
\begin{equation*}
    \wop u := \partial_t^2 u - \div(c^2 \nabla u).
\end{equation*}
In particular, we write $\square_1$ if $c_i = 1$ for $i = 1,2$, corresponding to the case of a homogeneous medium with unit wave speed.

Then, we consider the problem: find $u : Q \rightarrow \mathbb{R}$ such that 
\begin{equation}\label{eq:waveEquation}
        (\wop u, u) = (0,0) \quad \text{ in } Q \times \Sigma,
\end{equation}
along with the natural interface conditions 
\begin{equation}\label{eq:IF-conditions}
	\jump{u}_{\Gamma} = 0 \text{ on } \Sigma, \qquad 
	\quad \jump{c^2 \nabla u}_{\Gamma} \cdot \mathbf{n}_{\Gamma} = 0 \text{ on } \Sigma, 
\end{equation}
where $\mathbf{n}_{\Gamma}$ is the normal vector on $\Gamma$ pointing into $\Omega_+$ 
and the jump across $\Gamma$ is defined\textsuperscript{1}\footnotemark \footnotetext{\textsuperscript{1}And analogously for vector-valued functions.} as $\jump{u}_{\Gamma} = u_{+}-u_{-}$ with $u_{\pm}$ being the traces 
of $u|_{\Omega_{\pm}}$ on $\Gamma$. \par 
To ensure that the problem is well posed, one usually prescribes initial data
\begin{equation}\label{eq:initialData}
    (u,\dt u) \vert_{t = 0} = (u_0,u_1) \text{ in } \Omega, \tag{IVP} 
\end{equation}
where $u_0 \in H^1(Q)$ and $u_1 \in L^2(Q)$ are given. For a detailed analysis of the well-posedness of the problem \eqref{eq:waveEquation}-\eqref{eq:IF-conditions}+\eqref{eq:initialData} in heterogeneous media, we refer to \cite{StolkPhD}. 

In this work, we consider a \emph{data assimilation} problem instead, where the initial data is unknown, and we assume that there are given measurements $u_{\omega} \in L^2(\omega_T)$ prescribed in a data domain $\omega_T$, with $\omega \subset \Omega$, $\omega \not = \emptyset$:
\begin{equation}\label{eq:dataMatch}
    u = u_{\omega} \text{ in } \omega_T. \tag{DA}
\end{equation}
In a medium without an interface, the problem \eqref{eq:waveEquation}+\eqref{eq:dataMatch} is well-studied; see for example \cite{CM15,BFO20,BFMO21,MM21,DMS23,BDE24}. In particular, if the boundary $\partial \Omega$ is strictly convex, the problem is Lipschitz stable provided that the set $\omega_T \subset Q$ fulfills the geometric control condition (GCC) \cite{BLR92}. 
Roughly speaking, a set $\omega \subset \Omega$ and a final time $T$ fulfill the GCC if all geometric optic rays traveling with unit speed\textsuperscript{2}\footnotemark \footnotetext{\textsuperscript{2}We consider the metric to be adapted to the wave speed $c$.} in $\Omega$, accounting for possible reflections on the boundary $\partial \Omega$, intersect the set $(0,T) \times \omega$. For a more precise definition, we refer to \cite{BLR92}. Assuming that the GCC is satisfied, we have the following result.

\begin{thm}\label{thm:Lipschitz}
    Assume that $\partial \Omega$ is strictly convex and $c_i = 1$ for $i = 1,2$. If $\omega_T \subset Q$ fulfills the GCC, then there exists a constant $C > 0$ such that for any $\phi \in H^1(Q)$, we have the following estimates:
    \begin{align*}
        \Vert \phi \Vert_{L^\infty(0,T;L^2(\Omega))} + \Vert \dt \phi \Vert_{L^2(0,T;H^{-1}(\Omega))} &\le C \left(  \Vert \phi \Vert_{L^2(\omega_T)} + \Vert \phi \Vert_{L^2(\Sigma)} + \Vert \square_1 \phi \Vert_{H^{-1}(Q)} \right), \\
        \Vert \phi \vert_{t = 0} \Vert_{L^2(\Omega)} + \Vert \dt \phi \vert_{t = 0} \Vert_{H^{-1}(\Omega)} &\le C \left(\Vert \phi \Vert_{L^2(\omega_T)} + \Vert \phi \Vert_{L^2(\Sigma)} + \Vert \square_1 \phi \Vert_{H^{-1}(Q)} \right). 
    \end{align*}
\end{thm}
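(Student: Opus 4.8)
The plan is to reduce both displayed estimates to a single interior observability inequality for the homogeneous, Dirichlet-constrained wave equation, which is precisely the analytic content of the Bardos--Lebeau--Rauch theorem under the GCC, and then to recover the stated norms by a standard energy (transposition) estimate. First I would note that the two inequalities are essentially equivalent. Given control of the initial data $(\phi|_{t=0},\dt\phi|_{t=0})$ in $L^2(\Omega)\times H^{-1}(\Omega)$, the transposition solution theory for $\square_1$ with source in $H^{-1}(Q)$ and Dirichlet trace in $L^2(\Sigma)$ yields
\[
\|\phi\|_{L^\infty(0,T;L^2(\Omega))}+\|\dt\phi\|_{L^2(0,T;H^{-1}(\Omega))}\lesssim \|\phi|_{t=0}\|_{L^2(\Omega)}+\|\dt\phi|_{t=0}\|_{H^{-1}(\Omega)}+\|\square_1\phi\|_{H^{-1}(Q)}+\|\phi\|_{L^2(\Sigma)},
\]
whose right-hand side already contains exactly the boundary and source terms of the theorem; conversely the initial trace is dominated by the space-time norm. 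Hence it suffices to bound the initial data.

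To isolate the interior observation from the boundary data and the source, I would split $\phi=\Phi+\psi$. Here $\Phi$ is the very weak (transposition) solution of $\square_1\Phi=\square_1\phi$ in $Q$ with Dirichlet trace $\Phi|_\Sigma=\phi|_\Sigma$ and vanishing initial data, which by Lions--Magenes theory lies in $C([0,T];L^2(\Omega))\cap C^1([0,T];H^{-1}(\Omega))$ and obeys $\|\Phi\|_{L^2(\omega_T)}\lesssim \|\square_1\phi\|_{H^{-1}(Q)}+\|\phi\|_{L^2(\Sigma)}$. The remainder $\psi=\phi-\Phi$ then solves $\square_1\psi=0$ with homogeneous Dirichlet condition $\psi|_\Sigma=0$ and, because $\Phi|_{t=0}=0$, carries exactly the unknown initial data of $\phi$.

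For $\psi$ I would invoke the observability inequality attached to the GCC: under strict convexity of $\partial\Omega$ and the GCC for $\omega_T$, every finite-energy homogeneous-Dirichlet solution satisfies
\[
\|\psi|_{t=0}\|_{L^2(\Omega)}+\|\dt\psi|_{t=0}\|_{H^{-1}(\Omega)}\le C\,\|\psi\|_{L^2(\omega_T)}.
\]
This is the weak-level form of the Bardos--Lebeau--Rauch theorem (observation of $\psi$ itself in $L^2$, data measured at the $L^2\times H^{-1}$ level), which is equivalent by duality to exact controllability of the wave equation at the energy level. Combining it with the bound on $\Phi$ via $\|\psi\|_{L^2(\omega_T)}\le\|\phi\|_{L^2(\omega_T)}+\|\Phi\|_{L^2(\omega_T)}$ controls $(\phi|_{t=0},\dt\phi|_{t=0})$ in $L^2(\Omega)\times H^{-1}(\Omega)$, and the first step then delivers both displayed estimates.

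The main obstacle is the observability inequality for $\psi$: this is the deep microlocal ingredient. It rests on the propagation of singularities along generalized bicharacteristics together with a microlocal defect measure argument showing that, when the GCC holds, no energy of an escaping sequence can concentrate away from $\omega_T$; the strict convexity of $\partial\Omega$ is what guarantees transversal boundary reflections and rules out gliding rays, so the propagation result applies cleanly. Passing from the resulting a priori estimate with a compact remainder to the clean inequality uses a compactness--uniqueness argument in which the constant-coefficient unique continuation (Holmgren) eliminates the kernel. A secondary technical point I would need to verify is that the transposition solution $\Phi$ possesses a well-defined $L^2(\omega_T)$ trace at the claimed regularity, which follows from the hidden-regularity estimates for $\square_1$ with $L^2(\Sigma)$ boundary data.
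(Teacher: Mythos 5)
The paper gives no internal argument for this theorem: its ``proof'' is a one-line citation to \cite[Thm.~A.4, Rem.~A.5]{BFMO21control}, so the only meaningful comparison is with the standard argument behind that reference. Your sketch reconstructs exactly that argument: peel off the contribution of the source and of the lateral trace by a transposition solution $\Phi$ with vanishing initial data, observe that $\psi=\phi-\Phi$ is a free Dirichlet wave carrying precisely the initial data of $\phi$, apply the weak-level observability inequality for $(\psi|_{t=0},\partial_t\psi|_{t=0})\in L^2\times H^{-1}$ (equivalent by HUM duality to Bardos--Lebeau--Rauch exact controllability in $H^1_0\times L^2$ with $L^2(\omega_T)$ interior controls), and close with the a priori estimate for the nonhomogeneous problem. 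As a strategy this is correct and is, in spirit, the route of the cited appendix.

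Two technical points deserve a sharper treatment than your sketch gives them, and the first is a genuine gap. (i) The source is controlled only in $H^{-1}(Q)$, the dual of the \emph{space-time} $H^1_0(Q)$; such a distribution may contain terms of the form $\partial_t f$ with $f\in L^2(Q)$, so neither Duhamel nor the classical Lions--Magenes transposition theory for sources in $L^1(0,T;L^2(\Omega))$ or $L^2(0,T;H^{-1}(\Omega))$ applies off the shelf. Constructing $\Phi\in C([0,T];L^2(\Omega))\cap C^1([0,T];H^{-1}(\Omega))$ with the bound you assert, and even giving meaning to the traces $\phi|_{t=0}$ and $\partial_t\phi|_{t=0}$ for a raw $\phi\in H^1(Q)$, requires a dedicated duality set-up with test functions adapted to both time endpoints; this is where most of the work in the cited proof actually lies. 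Relatedly, your side remark that ``the initial trace is dominated by the space-time norm'' is false for the velocity component: $\partial_t\phi\in L^2(0,T;H^{-1}(\Omega))$ alone has no pointwise-in-time trace, so the second display does not follow from the first; in your scheme it must be (and is) obtained first, from the observability step. (ii) A simplification: the weak-level observability for $\psi$ does not require its own microlocal or compactness--uniqueness argument. It follows from the energy-level BLR inequality by the standard shift: solve $\square_1\chi=0$, $\chi|_\Sigma=0$ with data $(\Delta^{-1}\psi_1,\psi_0)\in H^1_0\times L^2$ (Dirichlet Laplacian), so that $\partial_t\chi=\psi$ and $\Vert\psi\Vert_{L^2(\omega_T)}=\Vert\partial_t\chi\Vert_{L^2(\omega_T)}$ controls $\Vert(\psi_0,\psi_1)\Vert_{L^2\times H^{-1}}$. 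Neither point overturns your plan, but a complete proof must supply (i) rather than cite it as standard.
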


\begin{proof}
    See \cite[Thm. A.4]{BFMO21control} and \cite[Rem. A.5]{BFMO21control}.
\end{proof}

In the heterogeneous case, where $c_1 \not =  c_2$, the problem is less studied. In the absence of the GCC, we have to assume that the final time $T$ is large enough to ensure that problem \eqref{eq:waveEquation}-\eqref{eq:IF-conditions}+\eqref{eq:dataMatch} is well-posed.
To be precise, 
\jp{Thm. 5.19 of \cite{Filippas22} (restated below as \Cref{thm:filippas}) guarantees that solutions of \eqref{eq:waveEquation}-\eqref{eq:IF-conditions}+\eqref{eq:dataMatch} 
are unique if} 
\begin{equation}\label{eq:Tcondition}
	T > 2 \sup_{x \in \Omega} \operatorname{dist}_{c}(x, \jp{\omega}),
\end{equation}
where $\operatorname{dist}_c(x, \jp{\omega}) := \inf_{y \in \jp{\omega} } d_c(x,y)$. Here, $d_c$ is the distance function defined as the length of the shortest continuous path in $\Omega$ connecting $x$ and $y$ with respect to the metric adapted to the discontinuous wave speed $c$.
\jp{We remark that the condition on the time in \cref{eq:Tcondition} is identical to the one assumed in the classical unique continuation result of Tataru for a smooth coefficient $c$.}
While the condition \eqref{eq:Tcondition} guarantees unique solvability, the stability of problem \eqref{eq:waveEquation}-\eqref{eq:IF-conditions}+\eqref{eq:dataMatch} can be poor. However, as shown in \cite[Thm. 5.19]{Filippas22}, Assumption \eqref{eq:Tcondition} grants the following stability result holds. 
\begin{thm}[Thm. 5.19 of \cite{Filippas22}]\label{thm:filippas}
Let $\jp{\omega} \subset \Omega$ be nonempty and let $T > 0$ be s.t. \eqref{eq:Tcondition} holds. Then, there exists $C,\kappa$ such that for any initial data $(u_0,u_1) \in H^1_0(\Omega) \times L^2(\Omega)$ and $u$ solving \eqref{eq:IF-conditions}+\eqref{eq:initialData} and  
\[
\wop u \in L^2(Q), \quad u = 0 \text{ on } \Sigma,
\]
one has for any $\mu > 0$ that 
    \begin{equation}\label{eq:FilippasEstimate}
        \Vert (u_0,u_1) \Vert_{L^2 \times H^{-1}} \le C e^{\kappa \mu} \left( \Vert u \Vert_{L^2(\omega_T)} + \Vert \wop u \Vert_{L^2(Q)} \right) + \frac{C}{\mu} \Vert (u_0,u_1) \Vert_{H^1 \times L^2}. 
    \end{equation}
    In particular, if $(u_0,u_1) \not = (0,0)$ we have that 
    \begin{equation}\label{eq:FilippasEstimateLog}
        \Vert (u_0,u_1) \Vert_{L^2 \times H^{-1}} \le C \frac{\Vert (u_0,u_1) \Vert_{H^1 \times L^2}}{\log \left( 1 + \frac{\Vert (u_0,u_1) \Vert_{H^1 \times L^2}}{ \Vert u \Vert_{L^2(\omega_T)} + \Vert \wop u \Vert_{L^2(Q)}} \right)}. 
    \end{equation} 
\end{thm}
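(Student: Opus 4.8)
The plan is to obtain \eqref{eq:FilippasEstimate} from a global Carleman estimate for the wave operator $\wop$ that is compatible with the transmission conditions \eqref{eq:IF-conditions}, and then to deduce \eqref{eq:FilippasEstimateLog} by optimizing the free parameter $\mu$. The condition \eqref{eq:Tcondition} enters precisely to guarantee that a suitable space--time Carleman weight exists: one needs a weight built from a function $\psi(t,x) = |x-x_0|^2 - \beta\,(t-T/2)^2$ (composed with a convexifying exponential) whose level sets foliate all of $Q$ and sweep every point of $\Omega$ into $\omega_T$ within the interval $(0,T)$, where distances are measured with the metric $d_c$ adapted to the discontinuous wave speed. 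The lower bound $T > 2\sup_{x}\operatorname{dist}_c(x,\omega_T)$ is exactly what makes such a foliation possible and what enforces the pseudoconvexity of $\psi$ along the characteristics of $\wop$.

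The core step is the Carleman estimate itself: for the weight $e^{s\varphi}$ with large parameter $s$ and for $v$ solving $\wop v = f$ with $v = 0$ on $\Sigma$ and satisfying \eqref{eq:IF-conditions}, one aims at a weighted bound of the form
\[
s\,\Vert e^{s\varphi} v \Vert_{L^2(Q)}^2 + s\,\Vert e^{s\varphi}\nabla v \Vert_{L^2(Q)}^2 \;\lesssim\; \Vert e^{s\varphi} f \Vert_{L^2(Q)}^2 + \text{(observation terms on }\omega_T),
\]
valid for $s \ge s_0$. The delicate point, and the one I expect to be the main obstacle, is the treatment of the interface $\Gamma$: the integrations by parts producing the Carleman identity generate jump contributions on $\Gamma$, and the weight must be chosen so that, in combination with $\jump{v}_{\Gamma} = 0$ and $\jump{c^2\nabla v}_{\Gamma}\cdot\mathbf{n}_{\Gamma} = 0$, these terms carry a favourable sign. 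This is where the discontinuity of $c$ makes the analysis substantially harder than in the homogeneous setting of \Cref{thm:Lipschitz}, and it requires a transmission-adapted weight (monotone across $\Gamma$ in the appropriate sense), as developed for elliptic and hyperbolic transmission problems.

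Next I would convert the Carleman estimate into the balanced bound \eqref{eq:FilippasEstimate} by a spectral splitting. Decomposing the data along the eigenfunctions of $-\operatorname{div}(c^2\nabla\,\cdot\,)$ with the transmission conditions, with eigenvalues $\lambda_j^2$, I split $(u_0,u_1) = P_\mu(u_0,u_1) + (I-P_\mu)(u_0,u_1)$ at the threshold $\lambda_j \le \mu$. The low-frequency part is controlled by the observation $\Vert u\Vert_{L^2(\omega_T)} + \Vert\wop u\Vert_{L^2(Q)}$ with a constant growing like $e^{\kappa\mu}$, obtained by choosing $s\sim\mu$ in the weighted estimate. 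The high-frequency remainder is estimated directly by $\mu^{-1}\Vert(u_0,u_1)\Vert_{H^1\times L^2}$, using that on modes with $\lambda_j \ge \mu$ the $L^2\times H^{-1}$ norm gains a factor $\lambda_j^{-1}\le\mu^{-1}$ relative to the $H^1\times L^2$ norm. Adding the two contributions gives exactly \eqref{eq:FilippasEstimate}.

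Finally, \eqref{eq:FilippasEstimateLog} follows by minimizing the right-hand side of \eqref{eq:FilippasEstimate} over $\mu>0$. Writing $a = \Vert u\Vert_{L^2(\omega_T)} + \Vert\wop u\Vert_{L^2(Q)}$ and $b = \Vert(u_0,u_1)\Vert_{H^1\times L^2}$, the choice $\mu \sim \kappa^{-1}\log\!\left(1 + b/a\right)$ balances $e^{\kappa\mu} a$ against $b/\mu$ and yields the logarithmic rate; the stated form with $\log(1+\cdot)$ is recovered after elementary manipulation, treating separately the regime $b/a$ large (where the logarithmic term dominates) and $b/a$ bounded (absorbed into $C$). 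Once the transmission Carleman estimate is in hand, the spectral decomposition and the optimization in $\mu$ are routine, so essentially all the difficulty is concentrated in the construction of the interface-compatible weight and the sign analysis of the jump terms on $\Gamma$.
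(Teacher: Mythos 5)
First, a remark on the comparison itself: the paper does not prove \cref{thm:filippas} at all --- the statement is imported verbatim from \cite{Filippas22}, and the paper's ``proof'' is nothing but that citation. So your sketch must be measured against the proof in the cited reference, and there it diverges in an essential way. The core gap is your claim that hypothesis \eqref{eq:Tcondition} ``is exactly what enforces the pseudoconvexity of $\psi$ along the characteristics of $\wop$.'' This is not true. Under \eqref{eq:Tcondition} the set $\omega$ is an arbitrary nonempty open set and $T$ need only exceed $2\sup_{x}\operatorname{dist}_c(x,\omega_T)$; in this generality no pseudoconvex space--time weight built from $|x-x_0|^2-\beta(t-T/2)^2$ exists whose level sets sweep $Q$ into $\omega_T$ with favourable signs, because Carleman estimates of H\"ormander type for the wave operator require multiplier-type geometric conditions on $\omega$ (e.g.\ that it neighbours the part of $\partial\Omega$ where $(x-x_0)\cdot \mathbf{n}>0$), which are strictly stronger than \eqref{eq:Tcondition}. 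Indeed, unique continuation from an arbitrary nonempty $\omega$ in the Holmgren time fails in general for $C^\infty$ \emph{time-dependent} coefficients (Alinhac--Baouendi type counterexamples); its validity here rests not on pseudoconvexity but on the analyticity of the coefficients in time, via the Tataru--Robbiano--Zuily--H\"ormander theorem. The quantitative version in \cite{Filippas22} follows the strategy of Laurent and L\'eautaud: Carleman estimates for operators with \emph{partially analytic} coefficients, in which the solution is localized in the time-frequency variable at scale $\mu$ by an FBI/Gaussian transform, local estimates are propagated along chains of balls until they reach $\omega_T$ (this is precisely where the metric condition \eqref{eq:Tcondition} enters), and the term $\frac{C}{\mu}\Vert(u_0,u_1)\Vert_{H^1\times L^2}$ in \eqref{eq:FilippasEstimate} is the tail of that time-frequency localization --- not a spatial spectral remainder. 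Filippas's new ingredient is a local Carleman estimate near $\Gamma$ compatible with \eqref{eq:IF-conditions}; you correctly flag the jump terms on $\Gamma$ as the hard point, but that difficulty lives inside these local interface estimates, not inside a global pseudoconvex weight, which cannot exist under the stated hypotheses.

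Your third step (splitting along eigenfunctions of $-\div(c^2\nabla\,\cdot\,)$ at threshold $\mu$) is a reasonable Lebeau--Robbiano-style alternative, and your accounting of the high-frequency tail in $L^2\times H^{-1}$ is correct; however, as written, the low-frequency bound with constant $e^{\kappa\mu}$ is obtained ``by choosing $s\sim\mu$'' in the global Carleman estimate of your first step, so it inherits the gap above. To make that route self-contained you would instead need a spectral inequality for the transmission operator $-\div(c^2\nabla\,\cdot\,)$ (which itself requires elliptic Carleman estimates with jumps at an interface, \`a la Le Rousseau--Robbiano) together with an argument handling the inhomogeneity $\wop u\in L^2(Q)$ in the projected equation --- a genuinely different and nontrivial proof, not the one in the cited reference. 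The only part of your sketch that matches the reference unproblematically is the last one: deducing \eqref{eq:FilippasEstimateLog} from \eqref{eq:FilippasEstimate} by optimizing $\mu\sim\kappa^{-1}\log(1+b/a)$ is standard and correct.
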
 

In this article, we consider a discontinuous-in-time finite element method to solve the data assimilation problem \eqref{eq:waveEquation}-\eqref{eq:IF-conditions}+\eqref{eq:dataMatch} in the heterogeneous case. This method was originally introduced for the homogeneous case in \cite{BP24}. The focus of this article is not on the numerical analysis of the proposed discretization, but rather on investigating the numerically observed stability under the assumption of the GCC. Specifically, we want to investigate numerically whether the following assumption -- motivated by \cref{thm:Lipschitz} for the homogeneous case -- is justified. 

\begin{assum}\label{assum:LipschitzStability}
   If $ \omega_T \subset Q$ fulfills the GCC, then there exists a constant $C > 0$ such that for any $\phi \in H^1(Q)$ we have the following estimates
    \begin{align*}
        \Vert \phi \Vert_{L^\infty(0,T;L^2(\Omega))} + \Vert \dt \phi \Vert_{L^2(0,T;H^{-1}(\Omega))} &\le C \left(  \Vert \phi \Vert_{L^2(\omega_T)} + \Vert \phi \Vert_{L^2(\Sigma)} + \Vert \square_c \phi \Vert_{H^{-1}(Q)} \right), \\
        \Vert \phi \vert_{t = 0} \Vert_{L^2(\Omega)} + \Vert \dt \phi \vert_{t = 0} \Vert_{H^{-1}(\Omega)} &\le C \left(\Vert \phi \Vert_{L^2(\omega_T)} + \Vert \phi \Vert_{L^2(\Sigma)} + \Vert \square_c \phi \Vert_{H^{-1}(Q)} \right). 
    \end{align*}
\end{assum}

In summary, our goal is to understand whether the Lipschitz stability property is preserved in the presence of strong heterogeneity in the medium, provided that the GCC holds. This question is of high practical importance, as discontinuous media appear in many applications, for example, in seismology. If the corresponding data assimilation problems were indeed subject to the poor logarithmic stability property ensured by \eqref{eq:FilippasEstimateLog}, this would pose an enormous challenge for the design of accurate and reliable numerical methods. 
\begin{rmk}
\jp{While \Cref{assum:LipschitzStability} appears natural, it is possible that in certain configurations some additional conditions on the geometry and minimal time $T$ have to be imposed to prove Lipschitz stability. This is due to the complex interactions of rays with the interface $\Gamma$. For further discussion we refer to the reference \cite{LG23} in which sufficient conditions for boundary controllability of wave equations with a jumping coefficient have been derived  
and in particular to Remark 1.6 therein.} 
\end{rmk}

\section{Fully discrete discontinuous-in-time discretization}\label{sec:discretization} 
In this section, we present a fully discrete discontinuous-in-time finite element method for discretizing the data assimilation problem \eqref{eq:waveEquation}-\eqref{eq:IF-conditions}+\eqref{eq:dataMatch}, extending the method from \cite{BP24} to the heterogeneous case. \Cref{sec:spaceTimeDiscretization} describes the geometric partition of the space-time domain $Q$ into time slabs, followed by the full discretization in \cref{sec:method}. \jp{The chosen discretization allows for an iterative solution of the linear systems which is described in detail in \cite[Section 5]{BP24}. }

\subsection{Discretization of the space-time domain}\label{sec:spaceTimeDiscretization}
Let $\hat{\mathcal{T}}_h$ be a quasi-uniform triangulation with mesh size $h$ such that $\hat{\Omega}:= \cup_{K \in \hat{\mathcal{T}}_h} K $ contains $\Omega$, i.e.\ $\Omega \subset \hat{\Omega}$. Setting $\mathcal{T}_h := \{ K \cap \Omega \mid K \in  \hat{\mathcal{T}}_h \}$ then ensures that $\Omega = \cup_{K \in \mathcal{T}_h} K$ holds true.  
We recall from  \cite[Sec. 4.2]{BFMO21control} that the triangulation $\{ \hat{\mathcal{T}}_h \mid h > 0 \}$ can be constructed such that for $h$ small enough the following continuous trace inequality holds:  For all $v \in [H^1(K)]^d$ and $K \in \mathcal{T}_h$, we have
\begin{equation}\label{eq:traceInequality}
    \Vert v \Vert_{[L^2(\partial K)]^d} \le C \left(h^{-1/2} \Vert v \Vert_{[L^2(K)]^d} + h^{1/2} \Vert \nabla v \Vert_{[L^2(K)]^d} \right).  
\end{equation}
We assume additionally that the triangulation $\mathcal{T}_h$ fits the interface $\Gamma$ and the data domain $\omega$. An extension to allow for unfitted interfaces could be possible by utilizing techniques introduced in \cite{BP25} for a stationary problem, but this is certainly beyond the scope of this article. 

For polynomial degree $k \ge 1$, we define the $H^1$-conforming finite element space 
\begin{equation}
    V_h^k := \{ v \in H^1(\Omega) : v \vert_{K} \in \mathcal{P}^k(K) \ \forall K \in \mathcal{T}_h \},
\end{equation}
where $\mathcal{P}^k(K)$ denotes the space of polynomials of degree at most $k \in \mathbb{N}$ on $K \in \mathcal{T}_h$. Furthermore, we partition the time axis into $N$ subintervals $I_n = (t_n,t_{n+1})$, $n = 0, \dots, N-1$, where $0 = t_0 \le t_1 \le \dots \le t_N = T$. We assume that the intervals are of equal length and denote $\Delta t = \vert t_{n+1} -t_n \vert$. Then, we partition $Q$ and $\Sigma$ into time-slabs 
\begin{equation}
    \begin{aligned}
        Q^n := &I_n \times \Omega, \quad \Sigma^n := I_n \times \Sigma, \quad n = 0, \dots, N-1, \\
        Q = &\bigcup_{n = 0}^{N-1} Q^n, \quad \phantom{:} \Sigma = \bigcup_{n = 0}^{N-1} \Sigma^n.
    \end{aligned}
\end{equation}
In the following, we denote the space-time integrals on the time slabs as 
\begin{equation*}
    (u,v)_{Q^n} := \int_{I_n} \int_{\Omega} uv \ \dX \dT, \quad (u,v)_{\Sigma^n} := \int_{I_n} \int_{\Sigma} uv \ \dS \dT,
\end{equation*}
and define $\Vert v \Vert^2_{Q^n} := (v,v)_{Q^n}$ and $\Vert v \Vert^2_{\Sigma^n} := (v,v)_{\Sigma^n}$. 
Furthermore, we set 
\begin{equation*}
    \omega^n := I_n \times \omega, \quad (u,v)_{\omega^n} := \int_{I_n} \int_{\omega} uv \ \dX \dT, \quad \Vert v \Vert^2_{\omega_T} := \sum_{n = 0}^{N-1} (v,v)_{\omega^n}.
\end{equation*}
Finally, we define the time jump operator 
\begin{equation*}
    v^n_{\pm} (x) := \lim_{s \rightarrow 0^+} v(x,t_n \pm s), \quad \jump{v^n} := v^n_+ - v^n_-. 
\end{equation*}

\subsection{Full discretization}\label{sec:method}
 We define the discontinuous in time finite element spaces 
\begin{equation}
    \FullyDiscrSpace{k}{q} := \otimes_{n = 0}^{N-1} \mathcal{P}^q(I_n) \otimes V_h^k, \quad \ProdFullyDiscrSpace{k}{q} :=  \FullyDiscrSpace{k}{q} \times \FullyDiscrSpace{k}{q}, \quad q \in \mathbb{N}_0, k \in \mathbb{N}. 
\end{equation}
For elements of $\ProdFullyDiscrSpace{k}{q}$, we use the notation $\Uh = (\ul_1,\ul_2) \in \ProdFullyDiscrSpace{k}{q}$. In the following, we denote
\begin{equation*}
    a(u,v)_{Q^n} := \int_{I_n} \int_{\Omega} c^2 \nabla u \cdot \nabla v \ \dX \dT,
\end{equation*}
and introduce a bilinear form $A$ that represents the wave-equation \eqref{eq:waveEquation} in mixed formulation:  
\begin{equation}
    \begin{aligned}
        A[\Uh,\Yh] := \sum_{n = 0}^{N -1} \Big\{ &(\dt \ul_2, \yl_1)_{Q^n} + a(\ul_1,\yl_1)_{Q^n} + (\dt \ul_1 - \ul_2,\yl_2)_{Q^n} \\
        &- (c^2 \nabla \ul_1 \cdot \mathbf{n}, \yl_1)_{\Sigma^n} \Big\},
    \end{aligned}
\end{equation} 
where $\Uh \in \ProdFullyDiscrSpace{k}{q}$ and $\Yh \in \ProdFullyDiscrSpace{k^\ast}{q^\ast}$ with $k, k^\ast, q \in \mathbb{N}$ and $q^\ast \in \mathbb{N}_0$. 
To approximate solutions of \eqref{eq:waveEquation}-\eqref{eq:IF-conditions}+\eqref{eq:dataMatch}, we search for stationary points of the Lagrangian 
\begin{align*}
    \cL_h (\Uh, \Zh) := &\frac{1}{2} \Vert \ul_1 - u_{\omega} \Vert^2_{\omega_T} + A[\Uh,\Zh] + \frac{1}{2}  S_h(\Uh,\Uh) \\
    &- \frac{1}{2}  S_h^\ast(\Zh,\Zh) + \frac{1}{2} \Sud(\Uh,\Uh),
\end{align*}
\changed{for $\Uh \in \ProdFullyDiscrSpace{k}{q}, \Zh \in \ProdFullyDiscrSpace{k^\ast}{q^\ast}$}, where $S_h$, $S_h^\ast$, and $\Sud$ are stabilization terms yet to be defined. Note that the first and the second terms of $\cL_h$ incorporate the data and the PDE constraints, respectively. To define the stabilization terms $S_h$ and $S_h^\ast$, we introduce the following terms: 
\begin{align*}
    J(\Uh,\Wh) &:= \sum_{n = 0}^{N -1} \int_{I_n} \sum_{F \in \mathcal{F}_i} h (\jump{c^2 \nabla \ul_1}  \cdot \mathbf{n}, \jump{c^2 \nabla \wl_1}  \cdot \mathbf{n})_F \ \dT, \\
    G(\Uh,\Wh) &:= \sum_{n = 0}^{N -1} \int_{I_n} \sum_{K \in \mathcal{T}_h} h^2 (\dt \ul_2 - \div (c^2 \nabla \ul_1),\dt \wl_2 -\div( c^2 \nabla \wl_1))_K \ \dT, \\
    I_0(\Uh,\Wh) &:= \sum_{n = 0}^{N -1} (\ul_2 - \dt \ul_1, \wl_2-\dt \wl_1)_{Q_n}, \qquad 
    R(\Uh,\Wh) := \sum_{n = 0}^{N -1} h^{-1} (\ul_1,\wl_1)_{\Sigma^n},
\end{align*}
where $\jump{c^2 \nabla \cdot} \cdot \mathbf{n}$  denotes the jump of the normal derivative over interior facets $\mathcal{F}_i$.
$J(\cdot,\cdot)$ is a continuous interior penalty term in space, $G(\cdot,\cdot)$ is a Galerkin least squares term enforcing the PDE locally on each element, $I_0(\cdot,\cdot)$ enforces that $\ul_2 = \dt \ul_1$, and $R(\cdot,\cdot)$ ensures boundary stability. 
Then we define the primal stabilizer $S_h$ as 
\begin{equation}\label{eq:primalStab}
    S_h(\Uh,\Wh) := J(\Uh,\Wh) + I_0(\Uh,\Wh) + G(\Uh,\Wh) + R(\Uh,\Wh), 
\end{equation}
and the dual stabilizer $S_h^\ast$ through 
\begin{equation}
    S_h^\ast(\Yh,\Zh) := \sum_{n = 0}^{N-1} \left\{ (\yl_1,\zl_1)_{Q^n} + a(\yl_1,\zl_1)_{Q^n} + (\yl_2,\zl_2)_{Q^n} + h^{-1} (\yl_1,\zl_1)_{\Sigma^n} \right\}.
\end{equation}
The remaining stabilization term $\Sud$ imposes regularity on the discontinuities in time and is defined as 
\begin{equation}
    \Sud (\Uh,\Wh) := \underline{I}_1^{\uparrow \downarrow}(\Uh,\Wh) + \underline{I}_2^{\uparrow \downarrow}(\Uh,\Wh),
\end{equation}
where 
\begin{align*}
    \underline{I}_1^{\uparrow \downarrow}(\Uh,\Wh) &:= \sum_{n = 1}^{N-1} \left\{ \frac{1}{\Delta t} (\jump{\ul_1^n},\jump{\wl_1^n})_{\Omega} + \Delta t (c^2 \jump{\nabla \ul_1^n},c^2 \jump{\nabla \wl_1^n})_{\Omega}\right\}, \\
    \underline{I}_2^{\uparrow \downarrow}(\Uh,\Wh) &:= \sum_{n = 1}^{N-1} \frac{1}{\Delta t} (\jump{\ul_2^n},\jump{\wl_2^n})_{\Omega}.
\end{align*}

With the definition of $\cL_h$, the first order optimality conditions take the form: Find $(\Uh,\Zh) \in \ProdFullyDiscrSpace{k}{q} \times \ProdFullyDiscrSpace{k^\ast}{q^\ast}$ such that 
\begin{alignat*}{2}
    (\ul_1,\wl_1)_{\omega_T} \! + \! A[\Wh,\Zh] + S_h(\Uh,\Wh) + \Sud(\Uh,\Wh) &= (u_{\omega},\wl_1)_{\omega_T} \ &&\forall \Wh \in \ProdFullyDiscrSpace{k}{q}, \\
    A[\Uh,\Yh] -  S_h^\ast(\Yh,\Zh) &= 0 \ &&\forall \Yh \in \ProdFullyDiscrSpace{k^\ast}{q^\ast}.
\end{alignat*}
In a more compact form, we can write these conditions as: Find $(\Uh,\Zh) \in \ProdFullyDiscrSpace{k}{q} \times \ProdFullyDiscrSpace{k^\ast}{q^\ast}$ such that
\begin{equation}\label{eq:discreteProblem}
    B[(\Uh,\Zh),(\Wh,\Yh)] = (u_{\omega},\wl_1)_{\omega_T} \quad \forall (\Wh,\Yh) \in \ProdFullyDiscrSpace{k}{q} \times \ProdFullyDiscrSpace{k^\ast}{q^\ast},
\end{equation}
where 
\begin{equation}
    \begin{aligned}
        B[(\Uh,\Zh),(\Wh,\Yh)] \coloneqq \ &(\ul_1,\wl_1)_{\omega_T} + A[\Wh,\Zh]+ S_h(\Uh,\Wh) \\
        &+  \Sud(\Uh,\Wh) + A[\Uh,\Yh] - S_h^\ast(\Yh,\Zh).
    \end{aligned}
\end{equation}

\section{Error analysis}
This section outlines how the steps from \cite{BP24} can be adapted to the heterogeneous setting to analyze the approximation of \eqref{eq:waveEquation}-\eqref{eq:IF-conditions}+\eqref{eq:dataMatch} through \eqref{eq:discreteProblem}. In \cref{sec:analysis:infsup}, we establish inf-sup stability and continuity of the discrete problem with respect to suitable discrete norms.
In \cref{sec:analysis:approx}, we show that the error between the first component $\ul_1$ of the discrete solution and the continuous solution $u$ is bounded by the best approximation error in these norms. Finally, we argue that \cref{assum:LipschitzStability} allows to recover convergence rates in a physically relevant norm. 

\subsection{Inf-sup stability and continuity}\label{sec:analysis:infsup}
\changed{In the following, we assume that \eqref{eq:Tcondition} is satisfied.}
Let $\vert \cdot \vert_{S_h}, \vert \cdot \vert_{\uparrow \downarrow}$, and $\Vert \cdot \Vert_{S_h^{\ast}}$ be the (semi-)norms induced by the stabilizers $S_h$, $\Sud$, and $S_h^{\ast}$:
\begin{equation}
    \vert \Uh \vert^2_{S_h} := S_h(\Uh,\Uh), \quad \vert \Uh \vert^2_{\uparrow \downarrow} := \Sud(\Uh,\Uh), \quad \Vert \Zh \Vert^2_{S_h^{\ast}} := S_h^{\ast}(\Zh,\Zh).
\end{equation}
Then, we define the discrete norm 
\begin{equation}
    \tnorm{ (\Uh,\Zh) }^2 :=  \vert \Uh \vert^2_{S_h} + \vert \Uh \vert^2_{\uparrow \downarrow} + \Vert \ul_1 \Vert^2_{\omega_T} + \Vert \Zh \Vert^2_{S_h^\ast},
\end{equation}
and its strengthened version
\begin{equation}\label{eq:tnormwop}
    \begin{aligned}
        \tnorm{(\Uh, \Zh)}^2_{\wop} := \tnorm{(\Uh, \Zh)}^2 &+ \sum_{n = 0}^{N -1} \Big\{ \Vert \dt \ul_2 \Vert^2_{Q^n} + \Vert c^2 \nabla \ul_1 \Vert^2_{Q^n} + \Vert \dt \ul_1 \Vert^2_{Q^n} \\
        &+ \Vert \ul_2 \Vert^2_{Q^n} + \int_{I_n} \sum_{K \in \mathcal{T}_h} h^2 \Vert c^2 \ul_1 \Vert^2_{H^2(K)} \dT \Big\}. 
    \end{aligned}
\end{equation}

\begin{lem}
    The expressions $\tnorm{(\cdot,\cdot)}$ and $\tnorm{(\cdot,\cdot)}_{\wop}$ are norms on $\ProdFullyDiscrSpace{k}{q} \times \ProdFullyDiscrSpace{k^\ast}{q^\ast}$.  
\end{lem}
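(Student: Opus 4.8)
The plan is to verify the three norm axioms for $\tnorm{(\cdot,\cdot)}$ first and then deduce the claim for $\tnorm{(\cdot,\cdot)}_{\wop}$ by comparison. Homogeneity and non-negativity are immediate, since each of the four terms in $\tnorm{(\Uh,\Zh)}^2$ is a quadratic form associated with a symmetric positive semidefinite bilinear form: $S_h$, $\Sud$ and $S_h^\ast$ are sums of squared $L^2$-type inner products, and $\Vert \ul_1 \Vert_{\omega_T}$ is an $L^2$-seminorm. For the triangle inequality I would observe that $\vert \cdot \vert_{S_h}$, $\vert \cdot \vert_{\uparrow\downarrow}$, $\Vert \cdot \Vert_{\omega_T}$ and $\Vert \cdot \Vert_{S_h^\ast}$ are genuine seminorms, so that $\tnorm{(\cdot,\cdot)}$ is the Euclidean aggregation (square root of a sum of squares) of seminorms and is therefore itself a seminorm by Minkowski's inequality applied in $\R^4$. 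The only nontrivial axiom is definiteness.

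To prove definiteness, suppose $\tnorm{(\Uh,\Zh)} = 0$. Since all four summands are non-negative, each vanishes separately. From $\Vert \Zh \Vert_{S_h^\ast} = 0$ one reads off in particular $\sum_n \Vert \zl_1 \Vert^2_{Q^n} = \sum_n \Vert \zl_2 \Vert^2_{Q^n} = 0$, because $S_h^\ast$ controls the full space-time $L^2$-norms of both components; hence $\Zh = 0$ immediately. For the primal part I would unpack the vanishing of $\vert \Uh \vert_{S_h}$, $\vert \Uh \vert_{\uparrow\downarrow}$ and $\Vert \ul_1 \Vert_{\omega_T}$ into pointwise consequences: $R(\Uh,\Uh)=0$ forces $\ul_1 = 0$ on $\Sigma$; $I_0(\Uh,\Uh)=0$ forces $\ul_2 = \dt \ul_1$ on every slab; $G(\Uh,\Uh)=0$ forces $\dt \ul_2 = \div(c^2 \nabla \ul_1)$ element-wise; the interior-penalty term $J(\Uh,\Uh)=0$ removes the normal-derivative jumps of $c^2\nabla \ul_1$ across interior facets and, since the mesh fits $\Gamma$, enforces $\jump{c^2 \nabla \ul_1}_\Gamma \cdot \mathbf{n} = 0$; and $\vert \Uh \vert_{\uparrow\downarrow} = 0$ removes the temporal jumps $\jump{\ul_1^n}$, $\jump{\ul_2^n}$ and $c^2\jump{\nabla \ul_1^n}$.

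Combining these facts, I would argue that $\ul_1$ is in fact a genuine weak solution of the homogeneous problem: the $H^1$-conformity of $V_h^k$ together with the vanishing temporal jumps makes $\ul_1$ continuous in space and time, so $\ul_1 \in H^1(Q)$ with $\dt \ul_1 = \ul_2$; the vanishing of the facet jumps means $\div(c^2\nabla \ul_1) \in L^2(\Omega)$ globally with no distributional facet contributions and the interface conditions \eqref{eq:IF-conditions} are satisfied; and the element-wise identity $\dt \ul_2 = \div(c^2\nabla\ul_1)$, together with $\jump{\ul_2^n}=0$, then upgrades to $\wop \ul_1 = 0$ in $Q$. Thus $\ul_1$ solves \eqref{eq:waveEquation}-\eqref{eq:IF-conditions} with $\ul_1 = 0$ on $\Sigma$ and $\ul_1 = 0$ on $\omega_T$. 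Invoking the uniqueness of the continuation problem guaranteed under \eqref{eq:Tcondition} (or under the GCC) yields $\ul_1 \equiv 0$, whence $\ul_2 = \dt \ul_1 = 0$ by $I_0$, so $\Uh = 0$ and definiteness holds. Finally, since $\tnorm{(\Uh,\Zh)}^2_{\wop}$ is obtained from $\tnorm{(\Uh,\Zh)}^2$ by adding further non-negative quadratic terms, it is a seminorm dominating $\tnorm{(\cdot,\cdot)}$; definiteness is inherited from the latter, so it is a norm as well.

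I would expect the main obstacle to be the passage from the purely discrete, element-wise relations to a bona fide weak solution on which a continuous unique continuation result can legitimately be applied. In particular, one must check that the facet and temporal jump conditions really do eliminate all distributional defects, so that $\wop \ul_1 = 0$ holds globally and the interface conditions are met, and confirm that the piecewise-polynomial regularity of $\ul_1$ is compatible with the unique continuation property underlying the well-posedness of \eqref{eq:waveEquation}-\eqref{eq:IF-conditions}+\eqref{eq:dataMatch}.
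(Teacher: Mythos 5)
Your proposal is correct and follows essentially the same route as the paper: reduce everything to definiteness, use the vanishing of the stabilizers to show $\ul_1$ lies in $H^1(Q)$ and is a global weak solution with $\wop \ul_1 = 0$, vanishing lateral and interface data, and $\ul_1 = 0$ on $\omega_T$, then invoke unique continuation for the continuous problem to conclude $\Uh = 0$ (with $\Zh = 0$ read off directly from $S_h^\ast$). The only cosmetic difference is the last step: the paper takes $\mu \to \infty$ in the quantitative estimate of \cref{thm:filippas} to get vanishing initial data and then uses continuous dependence on the data, whereas you appeal directly to Holmgren-type uniqueness under \eqref{eq:Tcondition} --- the same ingredient, resting on the same implicit assumption on $T$.
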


\begin{proof}
    It suffices to show that $\tnorm{(\cdot,\cdot)}$ is a norm. Since $\tnorm{(\cdot,\cdot)}$ is a semi-norm by definition, we only require that $\tnorm{(\Uh,\Zh)} = 0$ implies $\Uh=\Zh= 0$. Assume that $\tnorm{(\Uh,\Zh)} = 0$. Then, in particular, $\vert \Uh \vert_{\uparrow \downarrow} = 0$ and therefore $(\ul_1,\ul_2) \in [H^1(Q)]^2$. Furthermore, by definition of the stabilization terms $S_h$ and $S_h^{\ast}$, we have that $\Zh = 0$, $\ul_1 \vert_{\Sigma} = 0$, $\ul_1 \vert_{\omega_T} = 0$ and $\dt \ul_1 = \ul_2$. Similar to \cite[Lemma 2]{BP24}, partial integration yields 
    \begin{align*}
	   & \Vert \wop \ul_1 \Vert^2_{H^{-1}(Q)} := \sup_{\substack{  y \in H^1_0(Q), \\ \norm{y}_{H^1(Q) } = 1  }} \int_{Q} \left\{ -(\dt \ul_1) \dt y + c^2 \nabla \ul_1 \nabla y \right\}  \\
        &= \sup_{\substack{  y \in H^1_0(Q), \\ \norm{y}_{H^1(Q) } = 1  }} \Big\{ \sum_{n = 0}^{N-1} (\ul_2 - \dt \ul_1, \dt y)_{Q_n} 
        + \sum_{n = 0}^{N-1} \int_{I_n} \sum_{K \in \mathcal{T}_h} (\dt \ul_2 - \div(c^2 \nabla \ul_1),y)_{K} \ \dT \\
	    & \hspace{6em} + \sum_{n = 0}^{N-1} \int_{I_n} \sum_{F \in \mathcal{F}_i} (\jump{c^2 \nabla \ul_1} \cdot \mathbf{n}, y)_F \ \dT + \sum_{n = 0}^{N-1} (\jump{\ul_1^n},y)_{\Sigma^n} \Big\},
    \end{align*}
It follows by definition of the stabilizers and $\tnorm{(\Uh,\Zh)} = 0$ that $\Vert \wop \ul_1 \Vert_{H^{-1}(Q)} = 0$ and that the interface conditions \eqref{eq:IF-conditions} are fulfilled. Thus, by taking $\mu \rightarrow \infty$ in \cref{thm:filippas}, we conclude that $u_0 = u_1 = 0$. As solutions to \eqref{eq:waveEquation}-\eqref{eq:IF-conditions}+\eqref{eq:initialData} depend continuously on the data \cite{StolkPhD}, it follows that $\Uh = 0$. 
\end{proof}

 From this result and the identity
\[
B[(\Uh,\Zh),(\Uh,-\Zh)] = \tnorm{(\Uh,\Zh)}^2 = \tnorm{(\Uh,\Zh)}\tnorm{(\Uh,-\Zh)},
\]
it directly follows that the bilinear form $B$ enjoys inf-sup stability on $\ProdFullyDiscrSpace{k}{q} \times \ProdFullyDiscrSpace{k^\ast}{q^\ast}$ with respect to the $\tnorm{(\cdot,\cdot)}$-norm. 

\begin{cor}\label{cor:infsup}
    There exists a constant $C_B>0$ such that 
    \begin{equation}
        \sup_{(\Wh,\Yh) \in \ProdFullyDiscrSpace{k}{q} \times \ProdFullyDiscrSpace{k^\ast}{q^\ast}} \frac{B[(\Uh,\Zh),(\Wh,\Yh)]}{\tnorm{(\Wh,\Yh)}} \ge C_B \tnorm{(\Uh,\Zh)}
    \end{equation}
    for all $(\Uh,\Zh) \in \ProdFullyDiscrSpace{k}{q} \times \ProdFullyDiscrSpace{k^\ast}{q^\ast}$.
\end{cor}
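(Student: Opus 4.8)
The plan is to exploit the coercivity-type identity displayed immediately before the statement, which collapses the inf-sup estimate to a single well-chosen test function. Concretely, I would first verify the identity $B[(\Uh,\Zh),(\Uh,-\Zh)] = \tnorm{(\Uh,\Zh)}^2$ by substituting $\Wh = \Uh$ and $\Yh = -\Zh$ into the definition of $B$. The two occurrences of the wave-operator form then cancel, since $A[\Uh,\Zh]$ is paired with $A[\Uh,-\Zh] = -A[\Uh,\Zh]$. The surviving contributions are the data term $(\ul_1,\ul_1)_{\omega_T} = \Vert \ul_1 \Vert^2_{\omega_T}$, the primal stabilizer $S_h(\Uh,\Uh) = \vert \Uh \vert^2_{S_h}$, the temporal-jump term $\Sud(\Uh,\Uh) = \vert \Uh \vert^2_{\uparrow\downarrow}$, and the flipped dual stabilizer $-S_h^\ast(-\Zh,\Zh) = +\Vert \Zh \Vert^2_{S_h^\ast}$, whose sum is exactly $\tnorm{(\Uh,\Zh)}^2$ by definition of the discrete norm.

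Next I would record the sign invariance $\tnorm{(\Uh,-\Zh)} = \tnorm{(\Uh,\Zh)}$, which holds because every constituent of the norm is a homogeneous quadratic depending on the dual variable only through $\Vert \Zh \Vert^2_{S_h^\ast}$. Bounding the supremum from below by the single admissible choice $(\Wh,\Yh) = (\Uh,-\Zh)$ then yields
\begin{equation*}
  \sup_{(\Wh,\Yh)} \frac{B[(\Uh,\Zh),(\Wh,\Yh)]}{\tnorm{(\Wh,\Yh)}}
  \ge \frac{B[(\Uh,\Zh),(\Uh,-\Zh)]}{\tnorm{(\Uh,-\Zh)}}
  = \frac{\tnorm{(\Uh,\Zh)}^2}{\tnorm{(\Uh,\Zh)}}
  = \tnorm{(\Uh,\Zh)},
\end{equation*}
which establishes the claim with the explicit constant $C_B = 1$.

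The main point is that the corollary carries essentially no remaining difficulty: the entire analytical burden was front-loaded into the preceding lemma, where the definiteness of $\tnorm{(\cdot,\cdot)}$ (the implication $\tnorm{(\Uh,\Zh)}=0 \Rightarrow \Uh=\Zh=0$) was derived via the $H^{-1}$-representation of $\wop \ul_1$, the limiting form of \cref{thm:filippas} as $\mu \to \infty$, and the continuous dependence result from \cite{StolkPhD}. Once $\tnorm{(\cdot,\cdot)}$ is known to be a genuine norm and the algebraic identity above is in place, the inf-sup bound is immediate and requires no further estimation; in particular no interpolation, trace, or inverse inequalities enter at this stage.
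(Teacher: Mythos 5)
Your proposal is correct and takes essentially the same route as the paper: the paper's proof consists precisely of the identity $B[(\Uh,\Zh),(\Uh,-\Zh)] = \tnorm{(\Uh,\Zh)}^2 = \tnorm{(\Uh,\Zh)}\,\tnorm{(\Uh,-\Zh)}$, i.e.\ testing with $(\Uh,-\Zh)$, using the cancellation of the $A$-terms and the sign invariance of the norm, which gives the bound with $C_B = 1$. There is nothing to add.
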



The trace inequality \eqref{eq:traceInequality} and the Cauchy-Schwarz inequality immediately yield the continuity of the bilinear form $A$.

\begin{lem}[Continuity of $A$]\label{lem:continuityA}
If $\mathbf{U}|_{Q^n} \in [H^1(Q^n) \cap L^2(0,T;H^2(\mathcal{T}_h))] \times H^1(Q^n)$ for $n=0,\ldots,N-1$ and for all $\Yh \in \ProdFullyDiscrSpace{k^\ast}{q^\ast}$, we have that 
    \begin{equation*}
        A[\mathbf{U},\Yh] \le C \tnorm{(\mathbf{U}, 0)}_{\wop} \tnorm{(0,\Yh)}. 
    \end{equation*} 
\end{lem}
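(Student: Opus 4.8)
The plan is to estimate $A[\mathbf{U},\Yh]$ slab by slab, applying the Cauchy--Schwarz inequality to each of the four terms in $A$ and then matching every resulting factor to a contribution of either $\tnorm{(\mathbf{U},0)}_{\wop}$ or $\tnorm{(0,\Yh)}$. Here I would use from the outset that $\tnorm{(0,\Yh)}^2 = \Vert \Yh \Vert^2_{S_h^\ast} = \sum_n\{ \Vert \yl_1\Vert^2_{Q^n} + a(\yl_1,\yl_1)_{Q^n} + \Vert \yl_2\Vert^2_{Q^n} + h^{-1}\Vert \yl_1\Vert^2_{\Sigma^n}\}$, and that the regularity hypothesis $\mathbf{U}|_{Q^n}\in [H^1(Q^n)\cap L^2(0,T;H^2(\mathcal{T}_h))]\times H^1(Q^n)$ makes all the quantities below finite: the elementwise $H^2$ regularity of $\ul_1$ renders the traces of $c^2\nabla\ul_1$ on $\partial K$ and on $\Sigma$ well defined, and $\ul_1,\ul_2\in H^1(Q^n)$ control the time derivatives. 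Throughout I would exploit that $c$ is bounded above and below away from zero and is smooth on each element $K$ (the mesh fits $\Gamma$), so that multiplication by $c^2$ is an isomorphism of $H^2(K)$; in particular $\Vert c\nabla\ul_1\Vert_{Q^n}\le C\Vert c^2\nabla\ul_1\Vert_{Q^n}$ and $\Vert \nabla(c^2\nabla\ul_1)\Vert_{L^2(K)}\le C\Vert c^2\ul_1\Vert_{H^2(K)}$.

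The three volume terms are routine. For $(\dt\ul_2,\yl_1)_{Q^n}$, Cauchy--Schwarz in space-time and then in $n$ gives a bound by $(\sum_n\Vert \dt\ul_2\Vert^2_{Q^n})^{1/2}(\sum_n\Vert \yl_1\Vert^2_{Q^n})^{1/2}$, the first factor being part of $\tnorm{(\mathbf{U},0)}_{\wop}$ and the second of $\Vert \Yh\Vert_{S_h^\ast}$. For $a(\ul_1,\yl_1)_{Q^n}$ I would use Cauchy--Schwarz for the symmetric positive bilinear form $a$, yielding $a(\ul_1,\ul_1)_{Q^n}^{1/2}a(\yl_1,\yl_1)_{Q^n}^{1/2}$; the $\yl_1$-factor is exactly a part of $\Vert \Yh\Vert_{S_h^\ast}$, while $a(\ul_1,\ul_1)_{Q^n}^{1/2}=\Vert c\nabla\ul_1\Vert_{Q^n}\le C\Vert c^2\nabla\ul_1\Vert_{Q^n}$ is controlled by the strengthened norm. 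Finally $(\dt\ul_1-\ul_2,\yl_2)_{Q^n}$ splits via the triangle inequality into $\Vert \dt\ul_1\Vert_{Q^n}$ and $\Vert \ul_2\Vert_{Q^n}$, both present in $\tnorm{(\mathbf{U},0)}_{\wop}$, paired against $\Vert \yl_2\Vert_{Q^n}$ from $\Vert \Yh\Vert_{S_h^\ast}$.

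The boundary term $-(c^2\nabla\ul_1\cdot\mathbf{n},\yl_1)_{\Sigma^n}$ is the crux, and is exactly where the strengthened norm and the $h$-weights are needed. After Cauchy--Schwarz on $\Sigma^n$ I would split the $h$-powers as $\bigl(h^{1/2}\Vert c^2\nabla\ul_1\cdot\mathbf{n}\Vert_{\Sigma^n}\bigr)\bigl(h^{-1/2}\Vert \yl_1\Vert_{\Sigma^n}\bigr)$, so that the second factor equals $(h^{-1}\Vert \yl_1\Vert^2_{\Sigma^n})^{1/2}$ and is absorbed into $\Vert \Yh\Vert_{S_h^\ast}$. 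It then remains to bound $h\,\Vert c^2\nabla\ul_1\cdot\mathbf{n}\Vert^2_{\Sigma^n}$ by $\tnorm{(\mathbf{U},0)}^2_{\wop}$. Writing the boundary integral as a sum over the boundary elements and applying the continuous trace inequality \eqref{eq:traceInequality} to $v=c^2\nabla\ul_1$ gives, elementwise, $h\Vert c^2\nabla\ul_1\Vert^2_{L^2(\partial K)}\le C(\Vert c^2\nabla\ul_1\Vert^2_{L^2(K)}+h^2\Vert \nabla(c^2\nabla\ul_1)\Vert^2_{L^2(K)})$. Integrating over $I_n$ and summing over $K$, the first summand reproduces $\Vert c^2\nabla\ul_1\Vert^2_{Q^n}$, and after $\Vert \nabla(c^2\nabla\ul_1)\Vert_{L^2(K)}\le C\Vert c^2\ul_1\Vert_{H^2(K)}$ the second reproduces precisely the term $\int_{I_n}\sum_{K}h^2\Vert c^2\ul_1\Vert^2_{H^2(K)}\,\dT$ appearing in $\tnorm{(\mathbf{U},0)}^2_{\wop}$.

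Collecting the four estimates and applying Cauchy--Schwarz in $n$ once more delivers $A[\mathbf{U},\Yh]\le C\tnorm{(\mathbf{U},0)}_{\wop}\tnorm{(0,\Yh)}$. The genuinely delicate point is the boundary term: one must pick the $h$-scaling so that the $h^{-1}$-weighted boundary contribution of $S_h^\ast$ and the $h^2$-weighted $H^2$ contribution of the strengthened norm combine exactly, and one must control $\nabla(c^2\nabla\ul_1)$ by $\Vert c^2\ul_1\Vert_{H^2(K)}$, for which the smoothness of $c$ on each element --- guaranteed by the interface-fitted mesh --- is essential. Since $\mathbf{U}$ is merely assumed to have the stated Sobolev regularity rather than to be discrete, no inverse estimate is available, which is precisely why the trace inequality together with the elementwise $H^2$ term in $\tnorm{\cdot}_{\wop}$ must be used.
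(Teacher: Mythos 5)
Your proof is correct and follows essentially the same route as the paper's: Cauchy--Schwarz slab by slab for the three volume terms, and for the boundary term the $h^{1/2}/h^{-1/2}$ weight splitting followed by the elementwise trace inequality \eqref{eq:traceInequality} applied to $c^2\nabla\ul_1$ (valid because the interface-fitted mesh makes $c$ smooth on each element), matching the resulting factors against $\tnorm{(\mathbf{U},0)}_{\wop}$ and $\Vert\Yh\Vert_{S_h^\ast}$. Your additional remarks on the $c$-dependence of the constants and on why no inverse estimate is available are correct refinements of the same argument, not a different approach.
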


\begin{proof}
    \changed{
    First, note that $\tnorm{(0,\Yh)} = \Vert \Yh \Vert_{S_h^\ast}$. With the Cauchy-Schwarz inequality, we obtain
    \begin{equation*}
        \sum_{n = 0}^{N-1} \left\{ (\dt u_2, \underline{y}_1)_{Q^n} + a(u_1,\underline{y}_1)_{Q^n} \right\} \le C \left( \sum_{n = 0}^{N-1} \Vert \dt u_2 \Vert^2_{Q^n} + \Vert c^2 \nabla u_1 \Vert^2_{Q^n} \right)^{1/2} \! \! \! \Vert \Yh \Vert_{S_h^{\ast}},
    \end{equation*}
    and 
    \begin{equation*}
        \sum_{n = 0}^{N-1} (\dt u_1 - u_2,\underline{y}_2)_{Q^n} \le C \left( \sum_{n = 0}^{N-1} \Vert \dt u_1 \Vert^2_{Q^n} + \Vert u_2 \Vert^2_{Q^n} \right)^{1/2} \Vert \Yh \Vert_{S_h^{\ast}}.
    \end{equation*}
    Since the interface $\Gamma$ is fitted by the triangulation, we have that $c^2 \nabla u_1 \in H^1(K)$ for each $K \in \mathcal{T}_h$. Thus, we can apply the trace inequality \eqref{eq:traceInequality} to obtain
    \begin{align*}
        \sum_{n = 0}^{N-1} (c^2 \nabla &u_1 \cdot n, \underline{y}_1)_{\Sigma^n} \\
        &\le \left( \sum_{n = 0}^{N-1} \int_{I_n} \sum_{K \in \mathcal{T}_h} h \Vert c^2 \nabla v \Vert_{\partial \Omega \cap \partial K}^2 \dT \right)^{1/2} \left( \sum_{n = 0}^{N-1} \int_{I_n} \sum_{K \in \mathcal{T}_h} h^{-1} \Vert \underline{y}_1 \Vert_{\partial \Omega \cap \partial K}^2 \dT \right)^{1/2} \\
        &\le C \left( \sum_{n = 0}^{N-1} \left\{ \Vert c^2 \nabla u_1 \Vert^2_{Q^n} + \int_{I_n} \sum_{K \in \mathcal{T}_h} h^2 \Vert c^2 u_1 \Vert_{H^2(K)} \dT \right\} \right)^{1/2} \Vert \Yh \Vert_{S_h^{\ast}}.
    \end{align*}
    The claim follows from putting all estimates together.}
\end{proof}

\subsection{Approximation results}\label{sec:analysis:approx}
With \cref{cor:infsup} and \cref{lem:continuityA}, we can show that the approximation error in the $\tnorm{(\cdot,\cdot)}$-norm is bounded by the best approximation error in the $\tnorm{(\cdot,\cdot)}_{\wop}$-norm.

\begin{thm}\label{thm:bestapprox}
    Let $u$ be a sufficient regular solution of \eqref{eq:waveEquation}-\eqref{eq:IF-conditions}+\eqref{eq:dataMatch} and $(\Uh,\Zh) \in \ProdFullyDiscrSpace{k}{q} \times \ProdFullyDiscrSpace{k^\ast}{q^\ast}$ be the solution to \eqref{eq:discreteProblem}. Set $\mathbf{U} := (u,\partial_t u)$. Then, there exists a constant $C>0$ such that
    \begin{equation}
   \tnorm{(\mathbf{U} - \Uh,\Zh)} \le \left( 1 + \frac{C}{C_B} \right) \inf_{\Vh \in \ProdFullyDiscrSpace{k}{q}} \tnorm{(\mathbf{U} - \Vh, 0)}_{\wop}.  
    \end{equation}
\end{thm}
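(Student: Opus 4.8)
The plan is to run a Céa-type (best-approximation) argument resting on three ingredients: the inf--sup stability of \cref{cor:infsup}, the continuity of \cref{lem:continuityA}, and a Galerkin-orthogonality identity expressing consistency of the scheme with the exact solution. For an arbitrary $\Vh \in \ProdFullyDiscrSpace{k}{q}$ I would split
\[
(\mathbf{U}-\Uh,\Zh) = (\mathbf{U}-\Vh,0) + (\Vh-\Uh,\Zh)
\]
and apply the triangle inequality. The first summand is controlled for free, since $\tnorm{(\mathbf{U}-\Vh,0)} \le \tnorm{(\mathbf{U}-\Vh,0)}_{\wop}$ by \eqref{eq:tnormwop}, so the real work is to bound the fully discrete piece $(\Vh-\Uh,\Zh) \in \ProdFullyDiscrSpace{k}{q}\times\ProdFullyDiscrSpace{k^\ast}{q^\ast}$ by the best-approximation error in the $\tnorm{\cdot}_{\wop}$-norm.

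The key ingredient is the consistency identity $B[(\mathbf{U},0),(\Wh,\Yh)] = (u_\omega,\wl_1)_{\omega_T}$ for all $(\Wh,\Yh)$, which, combined with the discrete equation \eqref{eq:discreteProblem}, yields the orthogonality $B[(\mathbf{U}-\Uh,-\Zh),(\Wh,\Yh)]=0$. To establish it I would substitute $\mathbf{U}=(u,\partial_t u)$ into $B$ and verify that every constraint and stabilization contribution of the exact solution vanishes: the term $I_0$ vanishes because $\ul_2=\partial_t\ul_1$ holds exactly; $G$ vanishes and $A[\mathbf{U},\Yh]=0$ because $\wop u=0$ by \eqref{eq:waveEquation}, where in the latter an integration by parts in space is used, the interface contributions along $\Gamma$ cancel by virtue of the transmission conditions \eqref{eq:IF-conditions}, and the resulting boundary integral on $\Sigma$ cancels the explicit facet term $(c^2\nabla\ul_1\cdot\mathbf{n},\yl_1)_{\Sigma^n}$ built into $A$; the facet penalty $J$ vanishes since $\jump{c^2\nabla u}\cdot\mathbf{n}=0$ on interior facets (using that the mesh fits $\Gamma$); the boundary term $R$ vanishes because $u=0$ on $\Sigma$; and $\Sud$ vanishes because $u$ and $\partial_t u$ are continuous across the slab interfaces. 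Finally, the data term reproduces $(u_\omega,\wl_1)_{\omega_T}$ by \eqref{eq:dataMatch}.

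With orthogonality in hand, I would apply \cref{cor:infsup} to the discrete pair $(\Vh-\Uh,-\Zh)$, whose $\tnorm{\cdot}$-norm coincides with that of $(\Vh-\Uh,\Zh)$ since the norm is insensitive to the sign of each argument. Bilinearity of $B$ together with the orthogonality then gives
\[
B[(\Vh-\Uh,-\Zh),(\Wh,\Yh)] = B[(\Vh-\mathbf{U},0),(\Wh,\Yh)].
\]
Expanding the right-hand side from the definition of $B$ (the terms $A[\Wh,0]$ and $S_h^\ast(\Yh,0)$ drop out) leaves only the data term, the stabilizers $S_h$ and $\Sud$, and $A[\mathbf{U}-\Vh,\Yh]$. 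The first three are estimated by the Cauchy--Schwarz inequality in the respective (semi-)inner products, each producing a factor $\tnorm{(\mathbf{U}-\Vh,0)}\le\tnorm{(\mathbf{U}-\Vh,0)}_{\wop}$, while $A[\mathbf{U}-\Vh,\Yh]$ is controlled by \cref{lem:continuityA} (legitimate by the assumed regularity of $u$), yielding $\tnorm{(\mathbf{U}-\Vh,0)}_{\wop}\,\tnorm{(0,\Yh)}$. Dividing by $\tnorm{(\Wh,\Yh)}$ and taking the supremum gives $\tnorm{(\Vh-\Uh,\Zh)}\le \tfrac{C}{C_B}\tnorm{(\mathbf{U}-\Vh,0)}_{\wop}$; combining with the triangle inequality and taking the infimum over $\Vh$ produces the stated factor $1+C/C_B$.

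The main obstacle is the consistency verification, specifically the integration by parts in $A[\mathbf{U},\Yh]$ across the interface $\Gamma$: one must check that the jump terms generated on $\Gamma$ are precisely those annihilated by the transmission conditions \eqref{eq:IF-conditions}, and that the leftover boundary contribution on $\Sigma$ matches the explicit facet term carried by $A$. Everything else is the routine abstract Céa-lemma machinery, using only Cauchy--Schwarz and the already-established stability and continuity bounds.
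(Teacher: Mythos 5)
Your proposal is correct and follows essentially the same route as the paper's proof: the same triangle-inequality splitting, the same consistency facts for the exact solution ($S_h(\mathbf{U},\cdot)=\Sud(\mathbf{U},\cdot)=0$, $A[\mathbf{U},\cdot]=0$, and $u=u_{\omega}$ on $\omega_T$) combined with the discrete equation \eqref{eq:discreteProblem}, and the same conclusion via \cref{cor:infsup}, Cauchy--Schwarz for the stabilizer and data terms, and \cref{lem:continuityA}. The only cosmetic difference is that you state Galerkin orthogonality explicitly before perturbing to $(\Vh-\Uh,-\Zh)$, whereas the paper performs the equivalent algebra directly on $B[(\Uh-\Vh,\Zh),(\Wh,\Yh)]$.
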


\begin{proof}
    Let $\Vh \in \ProdFullyDiscrSpace{k}{q}$  and $(\Wh,\Yh) \in \ProdFullyDiscrSpace{k}{q} \times \ProdFullyDiscrSpace{k^\ast}{q^\ast}$ be arbitrary. The triangle inequality yields 
    \begin{equation*}
        \tnorm{(\mathbf{U} - \Uh,\Zh)} \le \tnorm{(\mathbf{U} - \Vh,0)} + \tnorm{(\Vh - \Uh,\Zh)}.
    \end{equation*}
    For the second term, we consider
    \begin{align*}
        B[&(\Uh - \Vh, \Zh),( \Wh, \Yh)] \\
        &= B[(\Uh,\Zh),( \Wh, \Yh)] - (\underline{v}_1, \underline{w}_1)_{\omega_T} - S_h(\Vh,\Wh) - \Sud(\Vh,\Wh) - A[\Vh,\Yh] \\
        &= (u_{\omega}, \underline{w}_1)_{\omega_T} - (\underline{v}_1, \underline{w}_1)_{\omega_T} - S_h(\Vh,\Wh) - \Sud(\Vh,\Wh) - A[\Vh,\Yh] \\
        &= (u - \underline{v}_1, \underline{w}_1)_{\omega_T}  +  S_h(\mathbf{U} - \Vh,\Wh) + \Sud(\mathbf{U} - \Vh,\Wh) + A[\mathbf{U} - \Vh,\Yh],
    \end{align*} 
    where we use the fact that $\mathbf{U}$ is sufficiently smooth and $u = u_{\omega}$ on $\omega_T$. Then, we can apply \cref{lem:continuityA} to obtain 
    \begin{align*}
        (u - &\underline{v}_1, \underline{w}_1)_{\omega_T}  +  S_h(\mathbf{U} - \Vh,\Wh) + \Sud(\mathbf{U} - \Vh,\Wh) + A[\mathbf{U} - \Vh,\Yh] \\
        &\le C(\tnorm{(\mathbf{U} - \Vh,0)}_{\wop} \tnorm{(0,\Yh)}  + \tnorm{(\mathbf{U} - \Vh,0)} \tnorm{(\Wh,0)} ) \\
        &\le C \tnorm{(\mathbf{U} - \Vh,0)}_{\wop} \tnorm{(\Wh,\Yh)}.
    \end{align*}
    Thus, the inf-sup condition on $B$ yields that 
    \begin{equation*}
    \tnorm{(\mathbf{U} - \Uh,\Zh)} \le \frac{C}{C_B} \tnorm{(\mathbf{U} - \Vh, 0)}_{\wop},
    \end{equation*}
    which gives the claim. 
\end{proof}

\begin{cor}\label{cor:tnormConvRates }
    Under the assumption of \cref{thm:bestapprox} and assuming additionally that $\Delta t = C' h$ for some $C'>0$ we have that
    \begin{equation*}
	    \tnorm{(\mathbf{U} - \Uh,\Zh)} \le C h^s \Vert u \Vert_{H^{s+2}(Q)},
    \end{equation*}
    where $s := \min\{ k,q \}$.
\end{cor}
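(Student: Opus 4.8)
The plan is to combine \cref{thm:bestapprox} with a concrete interpolant and standard anisotropic (space--time) interpolation estimates. By \cref{thm:bestapprox} it suffices to exhibit a single $\Vh \in \ProdFullyDiscrSpace{k}{q}$ for which $\tnorm{(\mathbf{U} - \Vh, 0)}_{\wop} \le C h^s \Vert u \Vert_{H^{s+2}(Q)}$, since the infimum is dominated by any particular choice. I would take $\Vh = (\pi u, \pi \partial_t u)$, where $\pi = \pi^q_{\Delta t} \otimes \pi^k_h$ is the tensor-product interpolant, with $\pi^k_h$ the spatial Lagrange interpolant of degree $k$ onto $V_h^k$ and $\pi^q_{\Delta t}$ a temporal interpolant of degree $q$ on each slab $I_n$ that is nodally exact at the slab endpoints $t_n$. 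Because $\mathbf{U}$ only needs piecewise smoothness across $\Gamma$, all interpolation estimates are understood in the broken (subdomain-wise) sense, which is legitimate since $\mathcal{T}_h$ fits $\Gamma$; the endpoint exactness of $\pi^q_{\Delta t}$ is precisely what I will exploit to control the time-jump stabilizer $\Sud$.

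The estimate then proceeds block by block over the terms defining $\tnorm{(\cdot,0)}_{\wop}$, writing $e := \mathbf{U} - \Vh = (e_1,e_2) = (u - \pi u, \partial_t u - \pi \partial_t u)$. Since $u$ solves \eqref{eq:waveEquation}-\eqref{eq:IF-conditions}, the exact solution has no normal-derivative jumps across interior facets (including $\Gamma$, by \eqref{eq:IF-conditions}) and its elementwise PDE residual $\partial_t^2 u - \div(c^2 \nabla u)$ vanishes; hence in $J(e,e)$, $G(e,e)$ and $I_0(e,e)$ only the interpolation part of $e$ survives. For the facet term $J$, the boundary term $R$, and the elementwise $H^2$ contribution in \eqref{eq:tnormwop} I would use the trace inequality \eqref{eq:traceInequality} together with bounds of the form $\Vert u - \pi u \Vert_{H^m(K)} \le C (h^{k+1-m} + \Delta t^{\,q+1}) \Vert u \Vert_{H^{s+2}}$. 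For the time-jump terms in $\Sud$ I would use that $u$ and the endpoint-exact interpolant agree across each $t_n$, so that $\jump{e_1^n}$ and $\jump{\nabla e_1^n}$ reduce to one-sided interpolation errors which, after the $\Delta t^{\pm 1}$ weights, are controlled at order $h^{s}$. The remaining volume blocks $\Vert \dt e_2 \Vert_{Q^n}$, $\Vert c^2 \nabla e_1 \Vert_{Q^n}$, $\Vert \dt e_1 \Vert_{Q^n}$, $\Vert e_2 \Vert_{Q^n}$ and $\Vert e_1 \Vert_{\omega_T}$ follow directly from the same anisotropic estimates applied with the appropriate number of derivatives.

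Finally, invoking $\Delta t = C' h$ collapses every mixed $h^a \Delta t^b$ contribution into a single power of $h$. Tracking the exponents shows the slowest-decaying blocks are the Galerkin-least-squares residual $G$, the coupling term $I_0$, the time-derivative term $\Vert \dt e_2 \Vert_{Q^n}$, and the elementwise $H^2$ term, each of which loses up to two orders relative to $H^{s+2}$ and thus scales like $h^{s}$; all other blocks are of order $h^{s+1}$ or higher. Summing the squared contributions over the $N \sim T/\Delta t$ slabs and over the elements, and taking square roots, yields $\tnorm{(\mathbf{U}-\Vh,0)}_{\wop} \le C h^s \Vert u \Vert_{H^{s+2}(Q)}$ and hence the claim. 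I expect the main obstacle to be the careful bookkeeping that matches every stabilization block---especially those engineered to vanish on the exact solution ($J$, $G$, $I_0$, $\Sud$)---to the correct interpolation order; the genuinely delicate point is constructing and analyzing the temporal interpolant $\pi^q_{\Delta t}$ so that $\Sud$ does not degrade the rate, which is the reason for insisting on endpoint exactness.
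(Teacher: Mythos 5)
Your proposal is correct in substance, but it takes the long route where the paper takes a short one. The paper's proof is two lines: using $c_i \in C^\infty(\Omega_i)$, it bounds the weighted norm by its unit-speed counterpart, $\tnorm{(\mathbf{U} - \Vh, 0)}_{\wop} \le C \tnorm{(\mathbf{U} - \Vh, 0)}_{\square_1}$, where $\tnorm{(\cdot,\cdot)}_{\square_1}$ is \eqref{eq:tnormwop} with $c \equiv 1$, and then simply cites the interpolation results of the homogeneous-case paper (\cite[Lemma 5 \& 11]{BP24}) applied to that norm. What you do instead is re-derive those cited lemmas from scratch in the heterogeneous setting: you build the tensor-product interpolant explicitly (spatial Lagrange of degree $k$, temporal degree $q$ with endpoint exactness), carry the $c^2$-weights through every block of $\tnorm{(\cdot,0)}_{\wop}$, and track the powers of $h$ and $\Delta t$ term by term. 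Both arguments work; the paper's norm-equivalence trick buys brevity and reuse of existing results (all $c$-dependence is absorbed into one constant up front), while your version is self-contained, makes the broken-regularity interpretation across $\Gamma$ explicit, and shows exactly which blocks ($G$, $I_0$, $\Vert \dt \ul_2 \Vert_{Q^n}$, the elementwise $H^2$ term) saturate the rate $h^s$. Two small refinements to your sketch: with an endpoint-exact temporal interpolant the jumps $\jump{e_1^n}$, $\jump{\nabla e_1^n}$, $\jump{e_2^n}$ vanish identically (both one-sided limits equal the spatial interpolant of $u(\cdot,t_n)$), so $\Sud$ contributes nothing rather than merely being "controlled at order $h^s$"; and the phrase "only the interpolation part of $e$ survives" is loose---what you mean, correctly, is that in $J$, $G$ and $I_0$ the contribution of the exact solution $\mathbf{U}$ drops out by \eqref{eq:waveEquation}--\eqref{eq:IF-conditions}, leaving quantities of the interpolant alone, which the trace and interpolation estimates then control.
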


\begin{proof}
    Due to the assumption that $c_i \in C^\infty(\Omega_i)$, $i = 1,2$, there exists $C > 0$ s.t.  
    \begin{equation*}
        \tnorm{(\Uh - \Vh, 0)}_{\wop} \le C \tnorm{(\Uh - \Vh, 0)}_{\square_1},
    \end{equation*}
where $\tnorm{(\cdot, \cdot)}_{\square_1}$ is defined by \eqref{eq:tnormwop} with $c_i = 1$, $i = 1,2$. Thus, we can apply the interpolation results derived in \cite[Lemma 5 \& 11]{BP24} to obtain the claim. 
\end{proof}

If the problem is Lipschitz stable, for instance in the homogeneous case (see \cref{thm:Lipschitz}), we can prove convergence in a physically relevant norm. To deal with the discontinuity in time of the discrete solution, a suitable lifting operator $L_{\Delta t} : \FullyDiscrSpace{k}{q} \rightarrow C^0(0,T;V_h^k)$ that ensures that $L_{\Delta t} \ul_1 \in H^1(Q)$ has been introduced in \cite[Sec. 4]{BP24}. Then, we state the following result.

\begin{thm}\label{thm:convergence}
    Let \cref{assum:LipschitzStability} be satisfied and assume that $\Delta t = C' h$. For $u$ being a sufficiently regular solution of \eqref{eq:waveEquation} and $(\Uh,\Zh) \in \ProdFullyDiscrSpace{k}{q} \times \ProdFullyDiscrSpace{k^\ast}{q^\ast}$ being the solution to \eqref{eq:discreteProblem}, there exists a constant $C>0$ such that
    \begin{equation}
	    \Vert u - L_{\Delta t} \ul_1 \Vert_{L^\infty(0,T;L^2(\Omega))} + \Vert \dt (u - L_{\Delta t} \ul_1) \Vert_{L^2(0,T;H^{-1}(\Omega))} \le C h^s \Vert u \Vert_{H^{s+2}(Q)}, 
    \end{equation}
    where $s := \min\{ k,q \}$. In particular, we have that 
    \begin{equation}
	    \Vert u_0 - \ul_1 \vert_{t = 0} \Vert_{L^2(\Omega)} + \Vert u_1 - (\dt \ul_1) \vert_{t = 0} \Vert_{H^{-1}(\Omega)} \le C h^s \Vert u \Vert_{H^{s+2}(Q)}.
    \end{equation}
\end{thm}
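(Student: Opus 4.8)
The plan is to combine the abstract stability estimate provided by \cref{assum:LipschitzStability} with the best-approximation bound from \cref{thm:bestapprox} (and its concrete rate in \cref{cor:tnormConvRates }), applied to the lifted error $\phi := u - L_{\Delta t}\ul_1 \in H^1(Q)$. The strategy is essentially to feed $\phi$ into the first estimate of \cref{assum:LipschitzStability} and control each of the three terms on its right-hand side, namely $\Vert \phi \Vert_{L^2(\omega_T)}$, $\Vert \phi \Vert_{L^2(\Sigma)}$, and $\Vert \square_c \phi \Vert_{H^{-1}(Q)}$, by the discrete norm $\tnorm{(\mathbf{U}-\Uh,\Zh)}$, which \cref{cor:tnormConvRates } already shows is $O(h^s)\Vert u\Vert_{H^{s+2}(Q)}$.

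First I would note that $u$ is the exact solution, so $\square_c u = 0$, $u=u_\omega$ in $\omega_T$, and $u=0$ on $\Sigma$; hence all three norms of $\phi$ reduce to norms of the discrete error $u - L_{\Delta t}\ul_1$ together with its residual. The data term $\Vert \phi\Vert_{L^2(\omega_T)} = \Vert u - L_{\Delta t}\ul_1\Vert_{L^2(\omega_T)}$ is bounded by $\Vert u-\ul_1\Vert_{\omega_T}$ plus the lifting discrepancy $\Vert \ul_1 - L_{\Delta t}\ul_1\Vert_{\omega_T}$; the former is exactly $\Vert \ul_1\Vert_{\omega_T}$ controlled in $\tnorm{\cdot}$ (via $\Vert \ul_1\Vert^2_{\omega_T}$), and the latter is controlled by the time-jump seminorm $\vert \cdot\vert_{\uparrow\downarrow}$ through the lifting estimates of \cite[Sec.~4]{BP24}. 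The boundary term $\Vert\phi\Vert_{L^2(\Sigma)}$ is handled analogously using the $R(\cdot,\cdot)$ part of $S_h$, which scales like $h^{-1}\Vert\ul_1\Vert^2_{\Sigma^n}$, so that $\Vert\ul_1\Vert_{L^2(\Sigma)}\le h^{1/2}\vert\Uh\vert_{S_h}$ and is absorbed with a favorable power of $h$.

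The main obstacle is the residual term $\Vert \square_c\phi\Vert_{H^{-1}(Q)} = \Vert \square_c(u - L_{\Delta t}\ul_1)\Vert_{H^{-1}(Q)}$. Since $\square_c u = 0$, this equals $\Vert \square_c L_{\Delta t}\ul_1\Vert_{H^{-1}(Q)}$, and I would estimate it by testing against $y \in H^1_0(Q)$ with $\Vert y\Vert_{H^1(Q)}=1$ and integrating by parts slab-by-slab, exactly as in the norm-property lemma proved earlier in the excerpt. This produces precisely the Galerkin-least-squares residual ($G$), the interior-penalty jump term ($J$), the term enforcing $\ul_2 = \dt\ul_1$ ($I_0$), and the time-jump contributions — all of which are components of $\vert\Uh\vert_{S_h}$ and $\vert\Uh\vert_{\uparrow\downarrow}$, up to the mismatch between $\ul_1$ and its lifting $L_{\Delta t}\ul_1$ that again falls under the lifting bounds of \cite{BP24}. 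The delicate point is ensuring every term generated by the integration by parts is dominated by the discrete norm with the correct (non-negative) power of $h$, so that no term degrades the $h^s$ rate.

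Putting these three bounds together gives $\Vert\phi\Vert_{L^2(\omega_T)} + \Vert\phi\Vert_{L^2(\Sigma)} + \Vert\square_c\phi\Vert_{H^{-1}(Q)} \le C\,\tnorm{(\mathbf{U}-\Uh,\Zh)} \le C h^s\Vert u\Vert_{H^{s+2}(Q)}$, where the last step is \cref{cor:tnormConvRates }. Invoking the first estimate of \cref{assum:LipschitzStability} with this $\phi$ then yields the claimed bound on $\Vert u - L_{\Delta t}\ul_1\Vert_{L^\infty(0,T;L^2(\Omega))} + \Vert \dt(u - L_{\Delta t}\ul_1)\Vert_{L^2(0,T;H^{-1}(\Omega))}$. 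Finally, the second ("in particular") statement follows by applying the second estimate of \cref{assum:LipschitzStability} in place of the first — or simply by evaluating the first bound at $t=0$, since the $L^\infty(0,T;L^2)$ control and the $H^{-1}$ control of $\dt\phi$ specialize to the initial-time traces $\Vert u_0 - \ul_1\vert_{t=0}\Vert_{L^2(\Omega)} + \Vert u_1 - (\dt\ul_1)\vert_{t=0}\Vert_{H^{-1}(\Omega)}$, again bounded by $C h^s\Vert u\Vert_{H^{s+2}(Q)}$.
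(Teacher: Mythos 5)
Your proposal is correct and takes essentially the same route as the paper, whose proof consists of invoking the argument of \cite[Theorem 14]{BP24}: lift the discrete solution, bound the data, boundary, and $H^{-1}$-residual terms of the lifted error by the stabilizer (semi-)norms, use \cref{cor:tnormConvRates } for the $h^s$ rate, and conclude with \cref{assum:LipschitzStability}. One caveat: your fallback justification of the ``in particular'' estimate (evaluating the first bound at $t=0$) fails for the time-derivative term, since control in $L^2(0,T;H^{-1}(\Omega))$ gives no pointwise-in-time trace at $t=0$; only your primary route via the second estimate of \cref{assum:LipschitzStability} is valid.
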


\begin{proof}
    The proof follows with the same arguments as the proof of \cite[Theorem 14]{BP24}.
\end{proof}

\begin{rmk}[Homogenous media]\label{rem:homMedia}
    If $c_i = 1$ for $i = 1,2$, \cref{assum:LipschitzStability} holds (see \cref{thm:Lipschitz}), and we recover the result from \cite[Theorem 14]{BP24}.
\end{rmk}

\begin{rmk}[Logarithmic stability]
    If \cref{assum:LipschitzStability} is not satisfied, we could still hope to utilize the results from \cref{thm:filippas}, provided that $T$ is large enough such that \eqref{eq:Tcondition} holds. To this end, we would have to deal with the following technicalities:  
    \begin{enumerate}
        \item The results from \cref{thm:filippas} are formulated in terms of $\Vert \wop u \Vert_{L^2(Q)}$, but we can only control $\Vert \wop \tilde{e}_h \Vert_{H^{-1}(Q)}$ for the residual $\tilde{e}_h = u - L_{\Delta t} \ul_1$ to the right order. Thus, we would have to rewrite the estimates from \cref{thm:filippas} with $\Vert \wop u \Vert_{H^{-1}(Q)}$ instead of $\Vert \wop u \Vert_{L^2(Q)}$. To this end, one might be able to shift the regularity suitably, cf. \cite[Sec. 2.6]{StolkPhD}, or use similar arguments as in \cite[Thm. A.1]{BFMO21}.
        \item We have to ensure that the constants in the estimates from \cref{thm:filippas} remain bounded when we consider the residual $\tilde{e}_h$. In particular, we have to control $C_{0,1} := \Vert (\tilde{e}_h \vert_{t = 0}, \dt \tilde{e}_h \vert_{t = 0}) \Vert_{H^1 \times L^2}$. We note that we can always ensure that $C_{0,1}$ is bounded by adding a suitable Tikhonov regularization term to \eqref{eq:primalStab}. 
    \end{enumerate}
    Then, we can use the arguments of \cite[Theorem 14]{BP24} to show that 
    \begin{equation*}
	    \Vert (u - \ul_1) \vert_{t = 0} \Vert_{L^2(\Omega)}  + \Vert \dt (u - \ul_1) \vert_{t = 0} \Vert_{H^{-1}(\Omega)} \le C C_{0,1} \log(1 + C_{0,1} h^{-s} \Vert u \Vert_{H^{s+2}(Q)}^{-1})^{-1}. 
    \end{equation*}
\end{rmk}

\section{Numerical experiments}
In this section, we present numerical experiments carried out with the proposed method. The examples in \Cref{ssection:1d-numexp}-\Cref{ssection:2d-numexp} are implemented using the \texttt{FEniCSx} library \cite{BarattaEtal2023,BasixJoss,ScroggsEtal2022,AlnaesEtal2014}.
\jp{The example in \Cref{ssection:whispering-gallery} uses the finite element software \texttt{Netgen/NGSolve} \cite{JS97, JS14} and in addition the space-time functionality provided by the add-on \texttt{ngsxfem} \cite{ngsxfem2021}.}  
\jp{A reference to reproduction material can be found in the section on \textbf{Code availability}.}

\subsection{Examples in one space dimension}\label{ssection:1d-numexp}
 We consider the unit interval $\Omega \coloneqq (0,1) \subset \mathbb{R}$ partitioned into $2^{L+1}$, $L \in \mathbb{N}$, uniform elements of size $h = 1/2^{L+1}$. Unless specified otherwise, we choose the time step as $\Delta t = h$. \changed{Further, we set $q = k$ and choose $k^{\ast} = 1$, $q^{\ast} = 1$ if $k = 1$ and $q^{\ast} = 0$ if $k > 1$.}

\subsubsection{A first example}\label{sec:numex:1D:simple}
 We define two subdomains $\Omega_1 = (0,0.5)$ and $\Omega_2 = (0.5,1.0)$ such that $\bar{\Omega} = \overline{\Omega}_1 \cup \overline{\Omega}_2$ and fix the wavespeed $c_2 := c \vert_{\Omega_2} = 1$ in $\Omega_2$. For variable $c_1 := c \vert_{\Omega_1}$, we consider the exact solution \cite{MHI08}
\begin{equation}\label{eq:1D:exact:simple}
    u(x,t) := \begin{cases}
        \cos(w_1 c_1 t) \cos(w_1(x-0.5)), & x \in \Omega_1, \\
        \cos(w_2 c_2 t) \cos(w_2(x-0.5)), & x \in \Omega_2,
    \end{cases}
\end{equation}
where we set $w_1 = 3 \pi$ and $w_2 = w_1 c_1 / c_2$. Furthermore, we define
\begin{equation}
	\omega \coloneqq [0.0,0.25] \cup [0.75,1.0], \quad \text{and} \quad \Omega_R \coloneqq \bar{\Omega} \setminus \omega = [0.25,0.75].
\end{equation}

In one space dimension, the requirement \eqref{eq:Tcondition} that $T > 2 \sup_{x \in \Omega} \operatorname{dist}_{c}(x,\omega_T)$ already implies the GCC. \changed{Thus, we can focus for the one-dimensional experiments on the condition \eqref{eq:Tcondition}.}
Let $d(x,y) \coloneqq \vert x - y \vert$, $x, y \in \Omega$, be the standard Euclidean metric. As in \cite{Filippas22}, we define a metric scaled with the discontinuous wave speed through
\begin{equation}
    d_c(x,y) \coloneqq \mathbbm{1}_{\Omega_1} c_1(x)^{-1} d(x,y) + \mathbbm{1}_{\Omega_2} d(x,y), \quad x,y \in \Omega.
\end{equation}
Thus, for $\omega$ as above, $c_2 = 1$ fixed and $c_1 > 1.0$, the condition \eqref{eq:Tcondition} translates to $T > 0.25(1+c_1^{-1})$. First, we consider the contrast $c_1 = 2.5$, which means that the GCC is satisfied if $T > 0.35$.  

In \Cref{fig:jumpCoefs:contrast:2.5}, we observe that the errors $\Vert u-L_{\Delta t} \ul_1 \Vert_{L^\infty(0,T;L^2(\Omega_R))}$ and $\Vert \partial_t (u-L_{\Delta t} \ul_1) \Vert_{L^2(0,T;L^2(\Omega_R))}$ converge with optimal order $\mathcal{O}(h^k)$ for $T = 0.5 > 0.35$. However, if $T = 0.1 < 0.35$, the order of convergence degrades, while the respective $L^2$-best approximation\textsuperscript{3}\footnotemark \footnotetext{\textsuperscript{3}\changed{We compare with the $L^2$-best approximation error, where we solve for the $L^2$-orthogonal projection of the exact solution onto the discrete space, as a measure of the approximation quality of the discrete space.}} errors still converge with optimal order. This suggests that the GCC seems to imply Lipschitz stability even in the heterogeneous case, and therefore \cref{assum:LipschitzStability} is indeed justified.

To investigate the dependence of the errors on the contrast, we fix the polynomial degree $k = 3$, refinement level $L = 3$, and $c_2 = 1$, and consider increasing values of $c_1 \in \{1.0,1.5,\dots,4.5\}$. In \cref{fig:jumpCoefs:increasingcontrast}, we observe that the approximation errors increase polynomially with $\mathcal{O}(c_1^3)$, while the $L^2$-best approximation errors grow quadratically with $\mathcal{O}(c_1^2)$. This is an important observation since an exponential growth in the contrast would have rendered the corresponding problems nearly impossible to resolve numerically.

However, this also entails that a strong refinement of the discretization is required to maintain the accuracy of the numerical solution as the contrast increases. We illustrate this problem in \cref{fig:jumpCoefs:quality}. For $c_1 = 2.5$, the exact solution is already well-captured by the numerical solution on refinement level $L=2$. However, for higher contrast $c_1 = 5.5$ this refinement level is clearly insufficient.

\begin{figure}[!htbp]
    \centering 
    \includegraphics[width=0.97\textwidth]{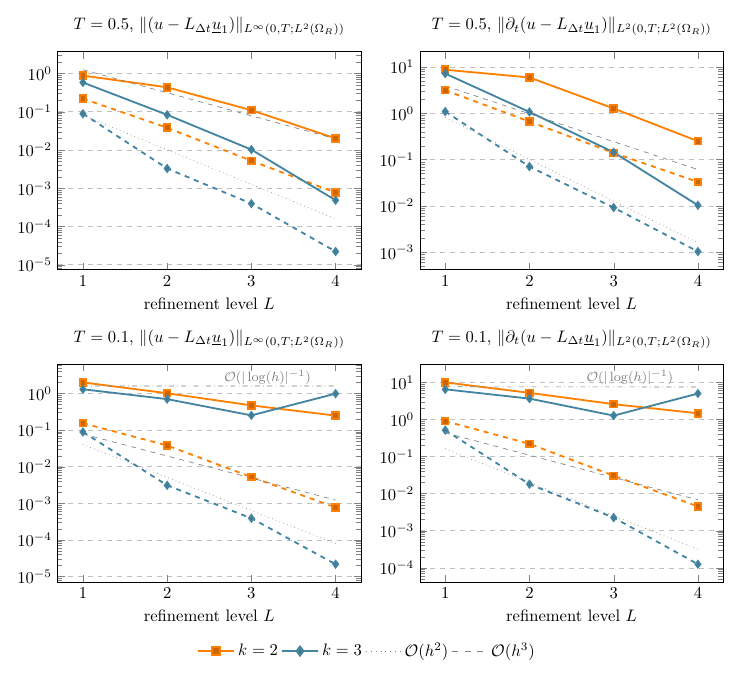}
    \caption{For the approximation of \eqref{eq:1D:exact:simple} with polynomial degree $k \in \{2,3\}$ and contrast $c_1 = 2.5$ we measure the errors $\Vert u-L_{\Delta t} \ul_1 \Vert_{L^\infty(0,T;L^2(\Omega_R))}$ (left) and $\Vert \partial_t (u-L_{\Delta t} \ul_1) \Vert_{L^2(0,T;L^2(\Omega_R))}$ (right) and compare to the respective error of the $L^2$-best approximation (dashed). For final time $T = 0.5 > 0.35$ (top), we observe optimal convergence rates, while for $T = 0.1 < 0.35$ (bottom) the order of convergence degrades.}
    \label{fig:jumpCoefs:contrast:2.5}
  \end{figure}

  \begin{figure}[!htbp]
    \centering
    \includegraphics[width=0.97\textwidth]{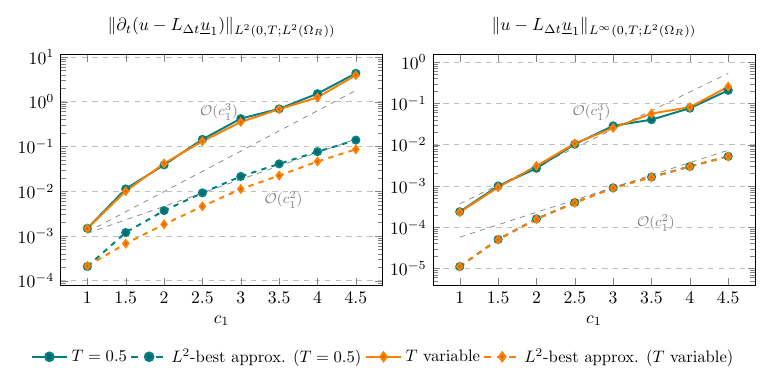}
    \caption{For $k = 3$ and $L = 3$, we consider the errors $\Vert \dt (u-L_{\Delta t} \ul_1) \Vert_{L^2(0,T;L^2(\Omega_R))}$ (left) and $\Vert u-L_{\Delta t} \ul_1 \Vert_{L^\infty(0,T;L^2(\Omega_R))}$ (right) and the respective $L^2$-best approximation errors (dashed) for increasing contrast $c_1 \in \{1.0,1.5,...,4.5\}$. We compare the case where $T = 0.5$ is fixed for all choices of $c_1$ and the case where $T$ adapted such that it barely fulfills $T > 0.25(1+c_1^{-1})$.}
    \label{fig:jumpCoefs:increasingcontrast}
  \end{figure}

\begin{figure}[!htbp]
    \centering
    \includegraphics[width=0.97\textwidth]{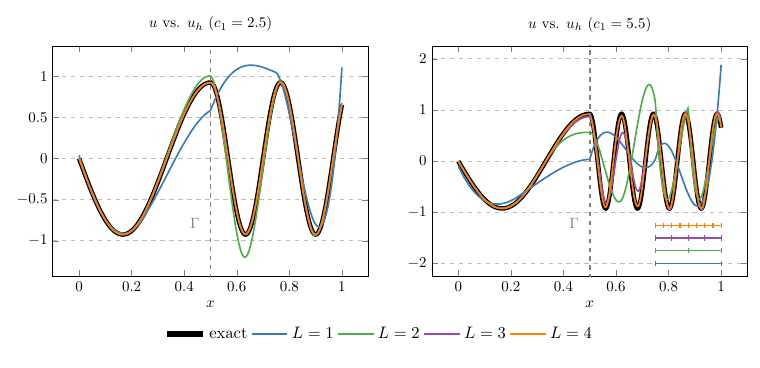}
    \caption{At time $t = 0.25$, we compare the exact solution \eqref{eq:1D:exact:simple} and approximations with polynomial degree $k = 3$ on refinement levels $L \in \{1,2,3,4 \}$ for contrasts $c_1 = 2.5$ (left) and $c_1= 5.5$ (right). The scale in the right plot indicates the mesh size $h = 1/2^{L+1}$ for the different refinement levels.}
    \label{fig:jumpCoefs:quality}
\end{figure}

\subsubsection{Multiple jumps}
To increase the complexity of the problem, we modify the exact solution \eqref{eq:1D:exact:simple} to allow for multiple jumps in the wave speed $c$. For two points $p_1,p_2 \in (0,1)$, we decompose $\Omega$ into three subdomains $\Omega_1 = (0,p_1)$, $\Omega_2 = (p_1,p_2)$, and $\Omega_3 = (p_2,1)$ s.t. $\bar{\Omega} = \cup_{i = 1,2,3} \overline{\Omega}_i$. Then, we consider the following ansatz for the exact solution: 
\begin{equation}\label{eq:1D:exact:simpleMult}
    u(x,t) := \begin{cases}
        \cos(w_1 c_1 t) \cos(w_1(x-p_1)), & x \in \Omega_1, \\
        \cos(w_2 c_2 t) \cos(w_2(x-p_1)), & x \in \Omega_2, \\
        \cos(w_3 c_3 t) \cos(w_3(x-p_2)), & x \in \Omega_3. 
    \end{cases}
\end{equation}
To ensure the continuity of \eqref{eq:1D:exact:simpleMult}, we choose $p_2$ in dependence of $p_1$ and $w_2$. For $n \in \mathbb{Z}$, we set
\begin{align*}
    p_2 \coloneqq \frac{2 \pi n + w_2 p_1}{w_2} \quad \Longrightarrow \quad \cos(w_2(p_2-p_1)) = 1.
\end{align*}
We fix $c_2 = 1.0$ and set $w_1 = 3 \pi$, $w_2 = w_1 c_1/c_2$,$c_3 = c_1$, and $w_3 = w_1$.

As a first example, we consider the data domain $\omega = [0.0,0.3] \times [0.7,1.0]$ and choose $p_1 = 0.4$ and $n = 3$ such that $p_2 \approx 0.667$. Thus, the interval $(p_1,p_2)$ is not contained in the data domain. 

For contrast $c_1 = c_3 \in \{ 7.5,11.5 \}$, \cref{fig:jumpCoefs:multipleJumps} shows the exact solution and the approximation with $k = 3$ for fixed time step $\Delta t = 1/32$ on the refinement levels $L \in \{3,4\}$. In the case of contrast $c_1 = 11.5$, the refinement level $L = 3$ does not seem to be sufficient to resolve the solution accurately, but for $L = 4$ the solution is captured reasonably well, even though we do not prescribe data in the interval $(p_1,p_2)$.  

\begin{figure}[!htbp]
    \centering
    \includegraphics[width=0.97\textwidth]{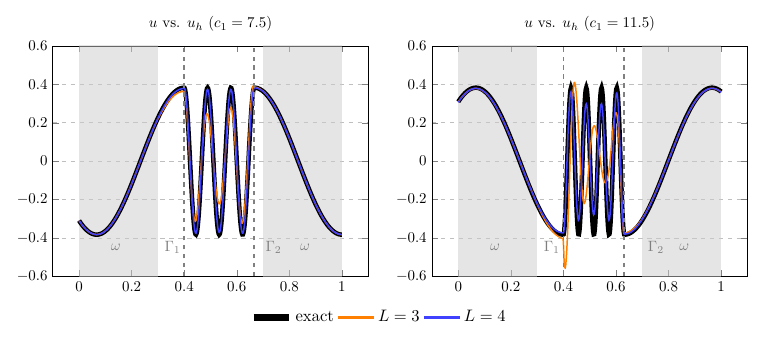}
    \caption{Exact solution \eqref{eq:1D:exact:simpleMult} and approximated solution with $k = 3$ of \eqref{eq:1D:exact:simpleMult} at $t = 0.25$ with contrast $c_1 = 7.5$ (left) and $c_1 = 11.5$ (right). We compare the approximations on the refinement levels $L = 3$ and $L = 4$, and set $\Delta t = 1/32$ for both cases.}
    \label{fig:jumpCoefs:multipleJumps}
  \end{figure}

As a second example, we consider the case where the data is only given in one half of the interval, i.e. $\omega = [0,0.3]$. The requirement \eqref{eq:Tcondition} and thus the GCC is fulfilled if 
\begin{equation}
    T > 2 \left[ (p_2 - p_1) + \frac{p_1 - 0.3 + 1 - p_2}{c_1} \right]. 
\end{equation}
We consider $c_1 \in \{ 2.5, 7.5\}$, for which the requirement $T > \{ 0.89,0.65 \}$ ensures the GCC. \Cref{fig:jumpCoefs:multipleJumps:noGCC} compares the results for polynomial degree $k = 3$ on the refinement level $L = 4$ with a fixed time step $\Delta t = 1/32$, for two final times $T = 0.5$ and $T = 1.0$. \changed{We compare the approximation of the exact solution for both final times at time $t = 0.5$}.
For both contrasts, the approximate solution matches the exact solution well if the GCC is fulfilled ($T = 1.0$), but shows significant deviations when it is not ($T = 0.5$). 
The deviation is more extreme for contrast $c_1 = 2.5$, where the gap between the considered time $T = 0.5$ and the threshold $T > 0.89$ is larger.

\begin{figure}[!htbp]
    \centering
    \includegraphics[width=0.97\textwidth]{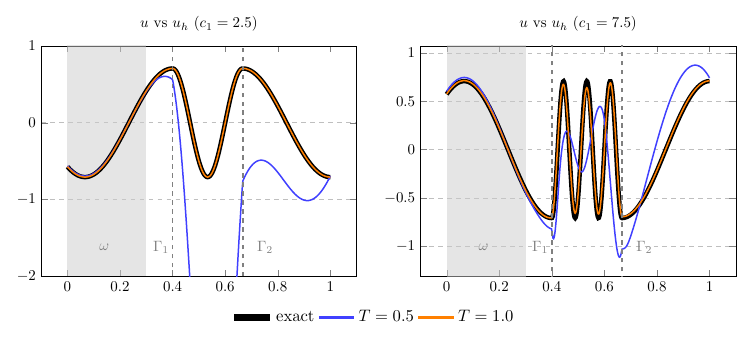}
    \caption{For fixed time step $\Delta t = 1/32$, refinement level $L = 4$, and polynomial degree $k = 3$, we compare the exact solution \eqref{eq:1D:exact:simpleMult} and approximated solution computed with \changed{final time} $T = 1.0$ and $T = 0.5$ at time $t = 0.5$. We set $c_2 = 1.0$ and consider the cases $c_1 = c_3 = 2.5$, $n = 1$ (left) and $c_1 = c_3 = 7.5$, $n = 3$ (right).} 
    \label{fig:jumpCoefs:multipleJumps:noGCC}
\end{figure}

\subsection{Example in two space dimensions}\label{ssection:2d-numexp}
We consider another example in two space dimensions. We partition the unit-square $\Omega \coloneqq [0,1]^2 \subset \mathbb{R}^2$ into subdomains $\Omega_1 = \{ (x,y) \in \Omega: x < 0.5 \}$ and $\Omega_2 = \{ (x,y) \in \Omega: x > 0.5 \}$, and 
extend the exact solution defined in \eqref{eq:1D:exact:simple} to the two-dimensional case by setting $u(x,y;t) \coloneqq u(x;t)$. 

We define the data domain $\omega \coloneqq \Omega \setminus [0.25,0.75]^2$ and note that $\omega$ fulfills the GCC. To ensure that the data domain and the interface $\Gamma$ are meshed exactly, we consider a quadrilateral mesh with $2^{L+1}$, $L \in \{1,2,3,4\}$, elements in each direction. We consider the final time $T = 0.75$  and set the time step size to $\Delta t = (1/2)^{L+1}$. As before, we fix the wave speed $c_2 = 1$ and consider different values for $c_1$ to control the contrast. \changed{Further, we choose the parameters $q = k^{\ast} = q^{\ast} = k$.}

The results of the experiment are presented in \cref{fig:2DjumpCoefs}. 
With contrast $c_1 = 2.5$, we observe quasi-optimal convergence with order  $\mathcal{O}(h^k)$ for polynomial degrees $k \in \{2,3\}$ for the errors $\Vert \dt(u-L_{\Delta t} \ul_1) \Vert_{L^2(0,T;L^2(\Omega))}$ and $\Vert u-L_{\Delta t} \ul_1 \Vert_{L^\infty(0,T;L^2(\Omega))}$. 
Similar as already observed in the one-dimensional case in \cref{fig:jumpCoefs:increasingcontrast}, we observe that these errors are more sensitive to increasing contrast $c_1$ than the corresponding $L^2$-best approximation errors. 
In particular, the ratio for $\Vert u-L_{\Delta t} \ul_1 \Vert_{L^\infty(0,T;L^2(\Omega))}$ with $k=2$ appears to grow as $\mathcal{O}(c_1^{3/2})$, which renders it very demanding to maintain a reasonable accuracy as the contrast increases. 

\begin{figure}[!htbp]
    \centering
    \includegraphics[width=0.97\textwidth]{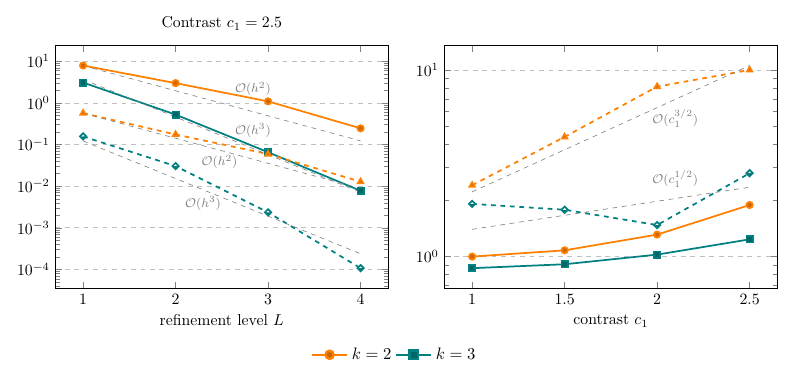}
    \caption{On the left: We observe quasi-optimal convergence of $\Vert \dt(u-L_{\Delta t} \ul_1) \Vert_{L^2(0,T;L^2(\Omega))}$ (solid) and $\Vert u-L_{\Delta t} \ul_1 \Vert_{L^\infty(0,T;L^2(\Omega))}$ (dashed) with contrast $c_1 = 2.5$ and polynomial degrees $k \in \{2,3\}$. On the right: We consider the ratio between these errors and the $L^2$-best approximation error for increasing contrast $c_1$.}
    \label{fig:2DjumpCoefs}
\end{figure}

\subsection{Whispering gallery modes in an annulus}\label{ssection:whispering-gallery}
\jp{
As a final example, we would like to consider a two-dimensional setting in which the Lipschitz stability estimates from \cref{assum:LipschitzStability} are not valid. 
To this end, we follow reference \cite{Filippas23}, in which it has been shown that, under certain assumptions on the contrast of the sound speed and the geometry, so-called whispering gallery modes exist. These are solutions of the wave equation that are exponentially decaying away from the interface, showing that the logarithmic stability estimate \eqref{eq:FilippasEstimateLog} from \cite{Filippas22} is in general optimal.

Indeed, in certain domains $\Omega$, we can follow \cite{Filippas23} to construct a sequence of eigenfunctions $w_{\ell}$  of $-\Delta_c : = - \div(c^2 \nabla  \cdot)$ with corresponding eigenvalues $\lambda_{\ell} \rightarrow \infty$, such that for all $\omega \subset \Omega$ with $\dis(\bar{\omega},\Gamma) > 0$, there exists constants $C,D > 0$ such that  
\[
\norm{ w_{\ell} }_{ L^2(\omega) } \leq C \exp(-D \sqrt{ \lambda_{\ell } }  ), \quad 
- \Delta_c w_{\ell } = \lambda_{\ell} w_{\ell }, \quad 
w_{\ell}|_{\partial \Omega } = 0, \quad
\norm{ w_{\ell }  }_{ L^2( \Omega) } = 1.  
\]
Then, we obtain a family of solutions of the wave equation
\bel{eq:whispering-gallery-mode}
u_{\ell}(t,x) = \cos( \sqrt{ \lambda_{\ell} } t ) w_{\ell}(x),   \quad \norm{ u_{\ell}|_{t=0}}_{ L^2(\Omega) }  + \norm{ \partial_t u_{\ell}|_{t=0} }_{ H^{-1}( \Omega )}  = 1,
\ee
whose initial condition can not be controlled uniformly in a Lipschitz stable manner by the data. This follows, since 
\[
\Vert  u_{\ell} \Vert_{L^2(\omega_T)} + \Vert u_{\ell} \Vert_{L^2(\Sigma)} + \Vert \square_c u_{\ell} \Vert_{H^{-1}(Q)}  \leq C_T \exp(-D \sqrt{ \lambda_{\ell } }  )
\]
goes to zero as $\ell \rightarrow \infty$.

As shown in \cite[Section 2]{Filippas23}, one possible geometrical configuration to accommodate such modes is an annulus, where $\Omega_1 = \{  R_0 < r < R_1 \} $ and $\Omega_2 = \{  R_1 < r < R_2 \} $ with 
$R_0 < R_1 < R_2$. To ensure the existence of whispering Gallery modes the contrast has to fulfill the condition 
\[
\frac{c_1}{R_1} < \frac{c_2}{R_2}.
\]
Due to the spherical geometry, we can obtain the mode $w_{\ell}$ by separation of variables
\bel{eq:whispering-sep}
w_{\ell}(r,\theta) = \exp(i \ell \theta) \psi_{\ell}(r)
\ee
and by solving the one-dimensional eigenvalue problem
\bel{eq:eigval-prob-r}
- \frac{c^2}{\ell^2} \left[  \partial_r^2 \psi_{\ell}  + \frac{1}{r} \partial_r \psi_{\ell} \right] + \frac{c^2}{r^2} \psi_{\ell} = \frac{ \lambda_{\ell} }{ \ell^2 } \psi_{ \ell}
\ee
with Dirichlet boundary conditions $\psi_{\ell}(r=R_0) = \psi_{\ell}(r=R_2) = 0$. 

While there exists many eigenpairs $( \lambda_{\ell}, \psi_{\ell} )$ for fixed $\ell$ that solve \eqref{eq:eigval-prob-r}, for our purposes it suffices to pick just one. We fix the parameters 
\[
R_0 = 3/5, \; R_1 = 1, \; R_2 = 3/2, \quad c_1 = 1, \; c_2 = \sqrt{20}
\]
and find two eigenpairs for $\ell =1$ and $\ell=10$ whose (radial) eigenfunctions $\psi_{\ell}$ are plotted in the upper panel of \cref{fig:whispering-gallery}.
The corresponding solutions of the wave equation}\textsuperscript{4}\footnotemark \ \footnotetext{\textsuperscript{3}{\jp{In the numerical experiments we work only with the real part of $w_{\ell}$ to obtain real-valued solutions}}}\jp{$u_{\ell}(t,x)$ as defined in \cref{eq:whispering-gallery-mode} are also displayed at the final time $t=T$ for 
$T=1$. 
The corresponding eigenvalues are $\lambda_{1} \approx 190.669$ and $\lambda_{10} \approx 404.347$. Since $\lambda_{10} >  \lambda_{1}$, the corresponding eigenfunction decays much faster towards the right of the interface at $r=1$ as can be seen in the central panel of \cref{fig:whispering-gallery}. The eigenfunctions shown here do not fall off significantly towards the left of the interface since we consider very low $\ell$ in order to keep the computational costs moderate. For higher $\ell \approx 200$, the corresponding eigenfunctions indeed decay rapidly also towards the left of the interface. However, these functions oscillate heavily in $\theta$-direction and in time making
them very challenging to resolve numerically. The two modes we have chosen for $\ell=1$ and $\ell=10$ are fortunately sufficient for our purpose. 
}

\begin{figure}[!htbp]
   \begin{center}
    \includegraphics[width=0.98\textwidth]{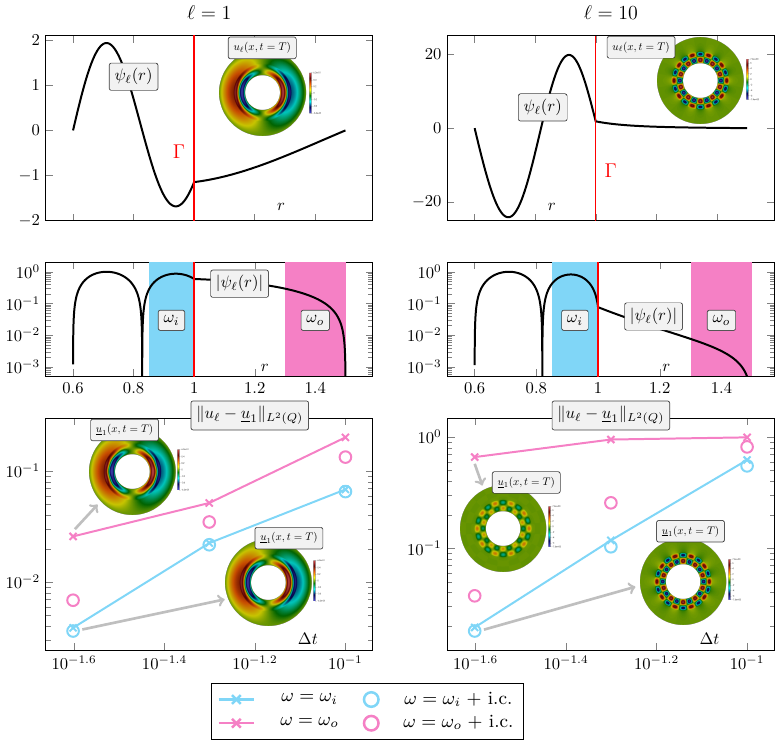}
   \end{center}
	\caption{ \jp{ The upper panel displays the modes $\psi(r)$ for $\ell=1$ (left), respectively $\ell=10$ (right) obtained by solving the eigenvalue problem \cref{eq:eigval-prob-r}. 
	 The insets show the corresponding solutions of the wave equation defined by \cref{eq:whispering-gallery-mode} and \cref{eq:whispering-sep}  at $t=T$. 
         The absolute value of $\psi(r)$ is shown in the central panel. The lower panel displays the convergence of $  \Vert u_{\ell}  - \underline{u}_1 \Vert_{ L^2(Q) } $.
	 The case where the data is given in $\omega_i$ is shown in magenta (solid lines with crosses) whereas the case where data is given in $\omega_o$ is displayed in cyan (same linestyle).
	 For reference, the circles (marked as '+ i.c.' in the legend) display the errors when the method is run with initial conditions $u_{\ell}(\cdot, t=0)$ and $\partial_t u_{\ell}(\cdot, t=0)$ 
	 known everywhere. The insets in the lower panel display the numerical reconstructions achieved at the final time corresponding to the refinement level indicated by the arrows. }
	}
    \label{fig:whispering-gallery}
\end{figure}

\jp{
We want to evaluate the performance of our method in reconstructing the modes $u_{\ell}$, particularly, in the case where the data domain $\omega$ is far away from the interface so that the mode for $\ell=10$ is very small in $\omega$. For this purpose, we consider the data domain $\omega_o  = \{  1.3 < r < R_2 \} $ in which the mode is very small according to the central panel of \cref{fig:whispering-gallery}. For comparison, we also consider a data domain close to the interface $\omega_i  = \{  0.85 < r < R_1 \}$.

The $L^2$-errors in $Q = (0,T) \times \Omega$ for the unique continuation problem of the mode $u_{\ell}$ computed with polynomial degree $q = k^{\ast} = q^{\ast} = k = 2$ are displayed in the lower panel of \cref{fig:whispering-gallery}.
Here, we have chosen $T=1$ such that the condition \cref{eq:Tcondition} for unique solvability is fulfilled.
When the data is given in $\omega_i$ (i.e. close to the interface), then the method converges with the same order as when the initial condition is given, i.e. the convergence is as good as for a well-posed problem irrespective of the considered mode $\ell$. However, when the data is given far away from the interface in $\omega_0$, then 
the rate of convergence drastically deteriorates as $\ell$ increases. 
This is in full agreement with the discussion in the beginning of this subsection: the Lipschitz stability result of \cref{assum:LipschitzStability} cannot hold for $\dis(\bar{\omega}_0,\Gamma) > 0$. The stability is of logarithmic type as asserted by  \cref{thm:filippas} (Thm. 5.19 of \cite{Filippas22}) and this is exemplified in our numerical experiments by the observation that the convergence rate appears to drops to logarithmic type as $\ell$ increases.
}

\section{Conclusion}
In this article, we investigated a numerical method for solving the variational data assimilation problem for the wave equation with discontinuous wave speed. Our numerical experiments lead to two main conclusions. First, obtaining accurate numerical solutions in the presence of material interfaces involving large contrasts is computationally demanding, and the resolution required to capture the complex behavior of the solution is a limiting factor in numerical simulations. This challenge is amplified by the fact that unique continuation problems are globally coupled in space and time. Second, even in the case of a discontinuous wave speed, the geometric control condition seems to ensure Lipschitz stability, allowing the design of accurate and reliable numerical methods. 

\bmhead{Acknowledgments}
The authors would like to thank Spyridon Filippas and Lauri Oksanen for interesting discussions.

\section*{Declarations}
%
On behalf of all authors, the corresponding author states that there is no conflict of interest.
\begin{itemize}
\item \textbf{Funding} Funding by EPSRC grant EP/V050400/1 is gratefully acknowledged.
\item \textbf{Code availability} Reproduction material is available at \url{https://github.com/TimvanBeeck/waveUC_discCoefs}.
\end{itemize}

\bibliography{references}

@article{BFMO21,
	author = {{Burman, Erik} and {Feizmohammadi, Ali} and {M\"unch, Arnaud} and {Oksanen, Lauri}},
	title = {Space time stabilized finite element methods for a unique continuation problem subject to the wave equation},
	DOI= "10.1051/m2an/2020062",
	journal = {ESAIM Math. Model. Numer. Anal.},
	year = 2021,
	volume = 55,
	pages = "S969-S991",
}

@article{BFMO21control,
         author = {{Burman, Erik} and {Feizmohammadi, Ali} and {M\"unch, Arnaud} and {Oksanen, Lauri}},
         title = {Spacetime finite element methods for control problems subject to the wave equation},
         DOI= "10.1051/cocv/2023028",
         url= https://doi.org/10.1051/cocv/2023028,
         journal = {ESAIM Control Optim. Calc. Var.},
         year = 2023,
         volume = 29,
         pages = "41"
}

@article{BFO20,
  title={A finite element data assimilation method for the wave equation},
  author={Burman, Erik and Feizmohammadi, Ali and Oksanen, Lauri},
  journal={Math. Comp.},
  volume={89},
  number={324},
  pages={1681--1709},
  year={2020},
  doi = {10.1090/mcom/3508}
}

@article{BLR92,
  title={Sharp sufficient conditions for the observation, control, and stabilization of waves from the boundary},
  author={Bardos, Claude and Lebeau, Gilles and Rauch, Jeffrey},
  journal={SIAM J. Control Optim.},
  volume={30},
  number={5},
  pages={1024--1065},
  year={1992},
  publisher={SIAM},
  doi = {10.1137/0330055}
}

@article{DMS23,
	author = {{Dahmen, Wolfgang} and {Monsuur, Harald} and {Stevenson, Rob}},
	title = {Least squares solvers for ill-posed PDEs that are conditionally stable},
	DOI= "10.1051/m2an/2023050",
	journal = {ESAIM Math. Model. Numer. Anal.},
	year = 2023,
	volume = 57,
	number = 4,
	pages = "2227-2255",
}

@misc{BarattaEtal2023,
  title     = {{DOLFINx}: the next generation {FEniCS} problem solving environment},
  author    = {Baratta, Igor A. and Dean, Joseph P. and Dokken, J{\o{}}rgen S. and Habera, Michal and Hale, Jack S. and Richardson, Chris N. and Rognes, Marie E. and Scroggs, Matthew W. and Sime, Nathan and Wells, Garth N.},
  doi       = {10.5281/zenodo.10447666},
  year      = {2023},
  howpublished = {preprint}
}

@article{ScroggsEtal2022,
  title     = {Construction of arbitrary order finite element degree-of-freedom maps on polygonal and polyhedral cell meshes},
  author    = {Scroggs, Matthew W. and Dokken, J{\o{}}rgen S. and Richardson, Chris N. and Wells, Garth N.},
  journal   = {ACM Trans. Math. Software},
  year      = {2022},
  volume    = {48},
  number    = {2},
  doi       = {10.1145/3524456},
  pages     = {{18:1--18:23}},
}

@article{BasixJoss,
  title     = {Basix: a runtime finite element basis evaluation library},
  author    = {Scroggs, Matthew W. and Baratta, Igor A. and Richardson, Chris N. and Wells, Garth N.},
  journal   = {J. Open Source Softw.},
  year      = {2022},
  volume    = {7},
  number    = {73},
  doi       = {10.21105/joss.03982},
  pages     = {3982}
}

@article{AlnaesEtal2014,
  title     = {Unified Form Language: A domain-specific language for weak formulations of partial differential equations},
  author    = {Alnaes, Martin S. and Logg, Anders and {\O{}}lgaard, Kristian B. and Rognes, Marie E. and Wells, Garth N.},
  journal   = {ACM Trans. Math. Software},
  year      = {2014},
  volume    = {40},
  doi       = {10.1145/2566630},
}

@article{CM15,
  title={Inverse problems for linear hyperbolic equations using mixed formulations},
  author={C{\^\i}ndea, Nicolae and M{\"u}nch, Arnaud},
  journal={Inverse Problems},
  volume={31},
  number={7},
  pages={075001},
  year={2015},
  publisher={IOP Publishing},
  doi = {10.1088/0266-5611/31/7/075001},
}

@article{BP24,
    author = {Burman, Erik and Preuss, Janosch},
    title = {{Unique continuation for the wave equation based on a discontinuous Galerkin time discretization}},
    journal = {IMA J. Numer. Anal.},
    pages = {draf036},
    year = {2025},
    month = {05},
    doi = {10.1093/imanum/draf036},
}

@article{Filippas22,
      title={Quantitative unique continuation for wave operators with a jump discontinuity across an interface and applications to approximate control}, 
      author={Spyridon Filippas},
      year={2024},
      journal = {Bull. Soc. Math. France},
      volume = {152},
      number = {2},
      pages = {199--294},
      doi = {http://dx.doi.org/10.24033/bsmf.2889}
}

@article{MHI08,
title = {Stable and accurate wave-propagation in discontinuous media},
journal = {J. Comput. Phys.},
volume = {227},
number = {19},
pages = {8753-8767},
year = {2008},
issn = {0021-9991},
doi = {https://doi.org/10.1016/j.jcp.2008.06.023},
url = {https://www.sciencedirect.com/science/article/pii/S0021999108003434},
author = {Ken Mattsson and Frank Ham and Gianluca Iaccarino},
}

@phdthesis{StolkPhD,
  title={On the modeling and inversion of seismic data},
  author={Stolk, Christiaan},
  year={2000},
  url = {https://dspace.library.uu.nl/handle/1874/855},
  school = {Utrecht University},
}

@article{BP25,
  title={Unique continuation for an elliptic interface problem using unfitted isoparametric finite elements},
  author={Burman, Erik and Preuss, Janosch},
  journal={SMAI J. Comput. Math.},
  volume={11},
  pages={165--202},
  year={2025}, 
  doi = {10.5802/smai-jcm.122}
}

@article{MM21,
    author = {Mishra, Siddhartha and Molinaro, Roberto},
    title = "{Estimates on the generalization error of physics-informed neural networks for approximating a class of inverse problems for {PDE}s}",
    journal = {IMA J. Numer. Anal.},
    volume = {42},
    number = {2},
    pages = {981-1022},
    year = {2021},
    month = {06},
    issn = {0272-4979},
    doi = {10.1093/imanum/drab032},
}

@article{BDE24,
  title = {{The unique continuation problem for the wave equation discretized with a high-order space-time nonconforming method}},
  author = {Burman, Erik and Delay, Guillaume and Ern, Alexandre},
  journal= {Numer. Math.},
  volume ={157}, 
  pages = {1259--1284},
  year = {2025}, 
  doi = {10.1007/s00211-025-01479-2}
}

@article{Filippas23,
	author = {Filippas, Spyridon},
	title = {Whispering gallery modes for a transmission problem},
	DOI= "10.1051/cocv/2023067",
	url= "https://doi.org/10.1051/cocv/2023067",
	journal = {ESAIM: COCV},
	year = 2023,
	volume = 29,
	pages = "75",
}

@techreport{JS14,
author = {Sch\"{o}berl, Joachim},
year = {2014},
month = {September},
pages = {},
institution={ASC-2014-30, Institute for Analysis and Scientific Computing},
title = {C++11 Implementation of Finite Elements in {NGS}olve},
url = {http://hdl.handle.net/20.500.12708/28346}
}

@article{JS97,
  author = {Sch\"{o}berl, Joachim},
  title = {{NETGEN} {A}n advancing front {2D/3D}-mesh generator based on abstract rules},
  journal = { Comput. Vis. Sci. },
  year = {1997},
  volume = {1},
  NUMBER = {1},
  pages = {41--52},
  doi = {https://doi.org/10.1007/s007910050004}
}

@article{ngsxfem2021,
  doi = {https://doi.org/10.21105/joss.03237},
  year = {2021},
  publisher = {The Open Journal},
  volume = {6},
  number = {64},
  pages = {3237},
  author = {Christoph Lehrenfeld and Fabian Heimann and Janosch Preuß and Henry von Wahl},
  title = {\texttt{ngsxfem}: Add-on to {NGSolve} for geometrically unfitted finite element discretizations},
  journal = {J. Open Source Softw.}
}

@article{LG23,
	author = {Gagnon, Ludovick},
	title = {Sufficient conditions for the controllability of wave equations with a transmission condition at the interface},
	DOI= "10.1051/cocv/2023034",
	url= "https://doi.org/10.1051/cocv/2023034",
	journal = {ESAIM: COCV},
	year = 2023,
	volume = 29,
	pages = "51",
}

\end{document}